\newtheorem{theorem}{Theorem}[section]
\newtheorem{proposition}[theorem]{Proposition}
\newtheorem{lemma}[theorem]{Lemma}
\newtheorem{corollary}[theorem]{Corollary}
\newtheorem{definition}[theorem]{Definition}
\theoremstyle{plain}
\theoremstyle{remark}
\newtheorem{remark}[theorem]{Remark}
\newcommand{\twopartdef}[4]
{
	\left\{
		\begin{array}{ll}
			#1 & \mbox{if } #2 \\
			#3 & \mbox{if } #4
		\end{array}
	\right.
}
\newcommand{\C}{{\mathbb C}}
\newcommand{\Q}{{\mathbb Q}}
\newcommand{\Z}{{\mathbb Z}}
\newcommand{\N}{{\mathbb N}}
\newcommand{\fr}{\mathfrak r}
\newcommand{\fs}{\mathfrak s}
\newcommand{\Qbar}{\bar{\Q}}
\DeclareMathOperator{\Gal}{Gal}
\DeclareMathOperator{\Norm}{N}
\newcommand{\bP}{{\mathbb P}}
\newcommand{\bfx}{{\mathbf x}}
\newcommand{\bfy}{{\mathbf y}}
\newcommand{\bfX}{{\mathbf X}}
\newcommand{\bfa}{{\mathbf a}}
\newcommand{\bfb}{{\mathbf b}}
\newcommand{\bmmu}{{\bm{\mu}}}
\newcommand{\cM}{\mathcal{M}}
\newcommand{\cL}{\mathcal{L}}
\newcommand{\scrS}{\mathscr{S}}
\newcommand{\scrI}{\mathscr{I}}
\newcommand{\scrB}{\mathscr{B}}
\newcommand{\scrP}{\mathscr{P}}
\definecolor{myblue}{rgb}{0.6, 0.9, 1}
\definecolor{mygreen}{rgb}{0,0,1}
\definecolor{orange}{rgb}{0.8,0,0.2}
\author{Avinash Kulkarni}
\address{
Avinash Kulkarni\\
Department of Mathematics\\
Simon Fraser University\\ 
Burnaby, BC V5A 1S6, Canada}
\email{akulkarn@sfu.ca}
\author{Niki Myrto Mavraki}
\address{
Niki Myrto Mavraki\\
Department of Mathematics\\
University of British Columbia\\ 
Vancouver, BC V6T 1Z2, Canada}
\email{myrtomav@math.ubc.ca}
\urladdr{www.math.ubc.ca/\~{}myrtomav}
\author{Khoa D.~Nguyen}
\address{
Khoa D.~Nguyen \\
Department of Mathematics\\
University of British Columbia\\
And Pacific Institute for The Mathematical Sciences\\ 
Vancouver, BC V6T 1Z2, Canada}
\email{dknguyen@math.ubc.ca}
\urladdr{www.math.ubc.ca/\~{}dknguyen}
\keywords{algebraic approximations, linear combinations of powers, linear recurrence sequences, Subspace Theorem}
\subjclass[2010]{Primary: 11J68, 11J87. Secondary: 11B37, 11R06}
\begin{document}
	\title[Algebraic Approximations to Linear Combinations of Powers]{Algebraic Approximations to Linear Combinations of Powers: an Extension of Results by Mahler and Corvaja-Zannier}
	
	\date{26 November, 2015}
	
	\begin{abstract}
	For every complex number $x$, let $\Vert x\Vert_{\Z}:=\min\{|x-m|:\ m\in\Z\}$. Let $K$ be a number field, let $k\in\N$, and let $\alpha_1,\ldots,\alpha_k$
	be non-zero algebraic numbers. In this paper,
	we completely solve
	the problem of the existence of $\theta\in (0,1)$ such that
	there are infinitely many tuples
	$(n,q_1,\ldots,q_k)$ 
	satisfying 
	$\Vert q_1\alpha_1^n+\ldots+q_k\alpha_k^n\Vert_{\Z}<\theta^n$
	where $n\in\N$ and 
	$q_1,\ldots,q_k\in K^*$ 
	 having 
	small logarithmic height
	compared to $n$. In the special case when $q_1,\ldots,q_k$ 
	have the form $q_i=qc_i$ for fixed $c_1,\ldots,c_k$,
	our work yields results on algebraic 
	approximations of 
	$c_1\alpha_1^n+\ldots+c_k\alpha_k^n$ of the form
	$\displaystyle \frac{m}{q}$ 
	with $m\in \Z$ and $q\in K^*$ (where $q$ has small logarithmic height compared to $n$).
	Various results on linear recurrence sequences also follow
	as an immediate consequence.
	The case $k=1$ and $q_1$ is essentially a rational integer
	was obtained by Corvaja and Zannier and settled a 
	long-standing question of Mahler. The use of the Subspace
	Theorem based on work of Corvaja-Zannier together with
	several modifications play an important role
	in the proof of our results.
	\end{abstract}

	\maketitle{}
	
  \section{Introduction}\label{sec:intro}
  Throughout this paper, let $\N$ denote
  the set of positive integers 
  and fix an embedding from $\Qbar$ to $\C$. For every
  complex number $x$, let $\Vert x\Vert_{\Z}$
  denote the distance to the nearest integer:
  $$\Vert x \Vert_{\Z}:=\min\{\vert x-m\vert:\ m\in \Z\}.$$ 
  
	In 1957, Mahler \cite{Mahler1957} used Roth's theorem (or more precisely, Ridout's non-archimedean version of Roth's theorem) to prove that 
	for $\alpha\in\Q\setminus \Z$ satisfying 
	$\alpha>1$ and for  
	$\theta\in (0,1)$, there are only
	finitely many $n\in\N$ such that 
	$\Vert\alpha^n\Vert_{\Z}<\theta^n$. Consequently, the
	number $g(k)$ in Waring's Problem satisfies:
	$$g(k)=2^k+\left\lfloor \left(\frac{3}{2}\right)^k\right\rfloor-2$$
    except for possibly finitely many $k$. Mahler also asked
    for which algebraic numbers $\alpha$ the above conclusion
    remains valid. In 2004, by ingenious applications of the Subspace Theorem, Corvaja and Zannier obtained the following
    answer to Mahler's question \cite[Theorem~1]{CZ-ActaMath2004}:
		
	\begin{theorem}[Corvaja-Zannier]\label{thm:CZ}
	Let $\alpha>1$ be a real algebraic number. Assume
	that for some $\theta\in (0,1)$, there are
	infinitely many $n\in\N$ such that  
	$\Vert \alpha^n\Vert_{\Z}<\theta^n$.
	Then there is $d\in\N$ such 
	that $\alpha^d$ is a Pisot number.
	\end{theorem}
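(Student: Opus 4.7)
The plan is to apply the Subspace Theorem to force a tight algebraic relation between $\alpha^n$ and its nearest-integer approximation $m_n$ for infinitely many $n$ in the hypothesis set, from which the Pisot property of some power of $\alpha$ will follow. As a preliminary step, one reduces to the case where $\alpha$ is an algebraic integer: if not, some non-archimedean place $v$ of $\Q(\alpha)$ would satisfy $|\alpha|_v > 1$, and since $m_n \in \Z$ implies $|m_n|_v \leq 1$, one would get $|\alpha^n - m_n|_v = |\alpha|_v^n$. Combined with the archimedean bound $|\alpha^n - m_n| < \theta^n$, this contradicts Ridout's $p$-adic extension of Roth's theorem, exactly as in Mahler's original argument.

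Let $\alpha = \alpha_1, \ldots, \alpha_d$ be the Galois conjugates of $\alpha$ and $K$ their Galois closure. One sets up the Subspace Theorem on a family of points $P_n$ in $K^N$ of suitable dimension $N > 2$ built from powers of $\alpha$ (or the $\alpha_i^n$) and $m_n$. At the distinguished archimedean place $v_0$ with $\sigma_{v_0}(\alpha) = \alpha$, one includes an ``error'' linear form $L_{v_0}$ that evaluates on $P_n$ to $\alpha^n - m_n$, which is $< \theta^n$ by hypothesis; at all other places one uses standard coordinate forms. A careful local-product computation then establishes the Subspace Theorem inequality $\prod_{v,i} |L_{v,i}(P_n)|_v < H(P_n)^{-\varepsilon}$ for some $\varepsilon > 0$, provided $\theta$ is small enough; if necessary one passes to a power $\alpha^{d_0}$ to amplify the $\theta^n$ savings relative to the height contributions from conjugates $\alpha_j$ with $|\alpha_j| \geq 1$.

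The Subspace Theorem then confines the infinitely many $P_n$ to a finite union of proper $K$-linear subspaces. By pigeonhole combined with Galois averaging (moving a given linear relation by $\Gal(K/\Q)$ and extracting a symmetric consequence), one concludes that $m_n = s_n := \sum_{i=1}^d \alpha_i^n$ for infinitely many $n$. For such $n$ the hypothesis reads $\left| \sum_{j \geq 2} \alpha_j^n \right| = |\alpha^n - s_n| < \theta^n$, and a standard linear-recurrence / Vandermonde argument (the classical theory of power sums) forces $|\alpha_j| < 1$ for all $j \geq 2$, at least after replacing $\alpha$ by an appropriate power $\alpha^d$ so as to resolve roots-of-unity coincidences among the ratios $\alpha_j / \alpha_k$. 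This is the Pisot property of $\alpha^d$.

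The principal obstacle lies in engineering the Subspace Theorem setup: choosing the right dimension $N$ and the right linear forms so that the $\theta^n$ gain at $v_0$ dominates the product contributions from the other variables, a balance that fails in the naive two-dimensional setup $(\alpha^n, m_n) \in K^2$ because the second linear form at $v_0$ eats up the savings. A secondary subtlety is extracting the clean conclusion $m_n = s_n$ from a generic linear relation produced by the Subspace Theorem; this requires Galois averaging and ruling out degenerate possibilities, and it is here that one may need to replace $\alpha$ by a power to conclude.
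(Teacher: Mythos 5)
The paper does not itself prove this statement; it quotes it from Corvaja--Zannier \cite[Theorem~1]{CZ-ActaMath2004} and recovers it as the $k=1$ special case of Theorem~\ref{thm:main1}, whose proof occupies Sections~\ref{sec:technical}--\ref{sec:proof main1}. Your outline matches the architecture of both the original Corvaja--Zannier argument and the paper's generalization: after passing to a power of $\alpha$ to eliminate roots-of-unity coincidences among ratios of conjugates, one applies the Subspace Theorem with an ``error'' form at a distinguished archimedean place to express the nearest integer $m_n$ as a linear combination of $n$-th powers of the conjugates of $\alpha$, then shows the remaining conjugates have modulus less than one. The engineering concerns you flag (choice of dimension, Galois symmetrization, failure of the naive two-dimensional setup) are the right ones.

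Where your sketch has a genuine gap is the preliminary reduction to algebraic integers. Mahler's Ridout argument is specific to $\alpha\in\Q$: there $q^n\alpha^n - m_n q^n$ is a rational integer whose numerator and denominator have prescribed $S$-unit structure, and that is what makes Ridout bite. For general algebraic $\alpha$, the number $\alpha^n - m_n$ is a \emph{varying} algebraic number of fixed degree, not a rational approximation to a fixed target, and knowing $|\alpha|_v>1$ at one non-archimedean $v$ together with $|\alpha^n - m_n|<\theta^n$ does not by itself put you in a Roth/Ridout configuration. In both the original CZ paper and this one, integrality is an \emph{output} of the main Subspace application rather than a preliminary: once one has $m_n=\sum_j\eta_j\alpha_j^n$ with the $\eta_j$ of slowly growing height, the bound $|m_n|_v\leq 1$ at a hypothetical place with $|\alpha|_v>1$ is fed into Proposition~\ref{prop:CZLemma1} to extract a nontrivial vanishing subsum, which Proposition~\ref{prop:general SML} then rules out; see the proof of Property~(ii) in Section~\ref{sec:proof main1}. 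A similar caveat applies to your final ``Vandermonde'' step: the inequality $\left|\sum_{j\geq 2}\alpha_j^n\right|<\theta^n$ holds only along a subsequence of $n$, not for $d$ consecutive values, so a determinant argument on consecutive power sums is unavailable; the paper again reuses Propositions~\ref{prop:CZLemma1} and~\ref{prop:general SML} at this point (Subsection~\ref{subsec:iv}).
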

	
	Note that a Pisot number is a real algebraic integer 
	greater than 1 while all the other conjugates
	have modulus less than 1. By considering the trace of
	$\alpha^{dn}$ for $n\in\N$, it is clear that the conclusion 
	of Theorem~\ref{thm:CZ} is optimal. Moreover, the condition
	that $\alpha$ is algebraic is necessary as illustrated
	in an example constructed by Corvaja and Zannier \cite[Appendix]{CZ-ActaMath2004}.

	By regarding $\{\alpha^n:\ n\in\N\}$ as
	a very special instance of linear recurrence
	sequences, it is natural to ask for an extension of Theorem~\ref{thm:CZ} to an arbitrary linear recurrence
	sequence
	of the form
	$\{Q_1(n)\alpha_1^n+\ldots+Q_k(n)\alpha_k^n:\ n\in\N\}$ 
	where $\alpha_i\in \Qbar^*$ and $Q_i(x)\in\Qbar[x]\setminus\{0\}$ for $1\leq i\leq k$. \emph{Without loss of generality,
	we can assume that $\vert\alpha_i\vert \geq 1$
	for $1\leq i\leq k$.}
	In fact, we will solve this problem for
	the more general sequence of the form $q_1\alpha_1^n+\ldots+q_k\alpha_k^n$ where $q_1,\ldots,q_k$ are non-zero 
	algebraic numbers having small logarithmic height compared to $n$. 
	
	Let $G_{\Q}$ be the absolute Galois group of $\Q$
	and let $\bmmu$ be the group of roots of unity. We have:
	\begin{definition}\label{def:non-degenerate}
	The tuple of non-zero algebraic numbers
	$(\alpha_1,\ldots,\alpha_k)$ is called non-degenerate
	if $\displaystyle\frac{\alpha_i}{\alpha_j}\notin \bmmu$
	for $1\leq i\neq j\leq k$.
	\end{definition}
	This definition does not rule out the possibility 
	that some $\alpha_i$ is a root of unity.
	When working with sums of the form $q_1\alpha_1^n+\ldots+q_k\alpha_k^n$, we can assume
	that $(\alpha_1,\ldots,\alpha_k)$ is non-degenerate without loss of generality. Indeed, suppose $\displaystyle \frac{\alpha_{k}}{\alpha_{k-1}}=\zeta$ is an $m$-th root of unity.
	For $0\leq r\leq m-1$, we restrict to $n\in \N$
	congruent to $r$ modulo $m$, and write
	$n=r+m\tilde{n}$. Then the sum
    $q_1\alpha_1^r(\alpha_1^{m})^{\tilde{n}}+q_2\alpha_2^r(\alpha_2^m)^{\tilde{n}}+\ldots+(q_{k-1}+\zeta^rq_k)\alpha_{k-1}^r(\alpha_{k-1}^m)^{\tilde{n}}$ is equal to and has less terms than the original 
    sum $q_1\alpha_1^n+\ldots+q_k\alpha_k^n$.

	The following definition generalizes the notion
	of Pisot numbers:
	\begin{definition}\label{def:Pisot tuple}
	Let $(\beta_1,\ldots,\beta_k)$ be a tuple of of distinct 
	non-zero 
	algebraic numbers. Write: 
	$$B:=\{\beta\in \Qbar^*\setminus\{\beta_1,\ldots,\beta_k\}:\ \beta=\sigma(\beta_i)\ \text{for some $\sigma\in G_{\Q}$
	and $1\leq i\leq k$}\}.$$
	Then the tuple $(\beta_1,\ldots,\beta_k)$ is called
	pseudo-Pisot if $\displaystyle\sum_{i=1}^k\beta_i+\sum_{\beta\in B}\beta\in \Z$ and
	$\vert\beta\vert <1$ for every $\beta\in B$. Moreover,
	if $\beta_i$ is an algebraic integer for $1\leq i\leq k$
	then the tuple $(\beta_1,\ldots,\beta_k)$ is called
	Pisot.
	\end{definition}
	Note that when $k=1$, our definition of being pseudo-Pisot  here is slightly different from Corvaja-Zannier \cite[p.~176]{CZ-ActaMath2004}
	since we do not require that $\vert\beta_i\vert > 1$
	for $1\leq i\leq k$. Such generality is necessary in
	order to deal with the
	condition $\vert \alpha_i\vert\geq 1$
	with the possibility of an equality.

	Let $h$ denote the 
	absolute logarithmic Weil height (see \cite{BG} or Section~\ref{sec:Subspace}). By a  
	\emph{sublinear function}, we mean a function
	$f:\N\rightarrow (0,\infty)$ satisfying $\displaystyle\lim_{n\to\infty} \frac{f(n)}{n}=0$. 
	Let $K$ be a number field, 
	our main result is the following:
	\begin{theorem}\label{thm:main1}
	Let
	$k\in\N$,
	let $(\alpha_1,\ldots,\alpha_k)$ be a non-degenerate
	tuple of algebraic numbers satisfying
	$\vert\alpha_i\vert \geq 1$
	for $1\leq i\leq k$, and let $f$ be a sublinear function.
	Assume that for some $\theta\in (0,1)$,
	the set $\cM$ of $(n,q_1,\ldots,q_k)\in \N\times (K^*)^{k}$ satisfying:
	$$\left\Vert \sum_{i=1}^k q_i\alpha_i^n\right\Vert_{\Z} <\theta^n\ \text{and}\ \max_{1\leq i\leq k} h(q_i)<f(n)$$
	is infinite. For all but finitely
	many $(n,q_1,\ldots,q_k)\in \cM$, the following
	hold:
	\begin{itemize} 
		\item [(i)] The tuple $(q_1\alpha_1^n,\ldots,q_k\alpha_k^n)$ is pseudo-Pisot.
		\item [(ii)] $\alpha_i$ is an algebraic integer for
		$1\leq i\leq k$.
		\item [(iii)] For $\sigma\in G_{\Q}$ and $1\leq i,j\leq k$, $\sigma(q_i\alpha_i^n)=q_j\alpha_j^n$
		if and only if $\displaystyle \frac{\sigma(\alpha_i)}{\alpha_j}\in\bmmu$.
		\item [(iv)] For $\sigma\in G_{\Q}$ and $1\leq i\leq k$, if $\displaystyle\frac{\sigma(\alpha_i)}{\alpha_j}\notin \bmmu$ for $1\leq j\leq k$ then 
		$\vert \sigma(\alpha_i)\vert<1$. 
	\end{itemize}
	\end{theorem}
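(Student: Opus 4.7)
The plan is to apply the Subspace Theorem to the $(k+1)$-dimensional vectors $\mathbf{x}_n := (q_1\alpha_1^n, \ldots, q_k\alpha_k^n, m_n)$, where $m_n \in \Z$ denotes an integer achieving $|m_n - \Sigma_n|_{v_0} < \theta^n$ for $\Sigma_n := \sum_{i=1}^k q_i\alpha_i^n$. One works over a finite Galois extension $L/\Q$ containing $K$ and all the $\alpha_i$, fixes the archimedean place $v_0$ of $L$ induced by the embedding $\Qbar\hookrightarrow\C$, and lets $S$ denote a finite set of places of $L$ containing every archimedean place together with every finite place at which some Galois conjugate of some $\alpha_i$ fails to be a unit.

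At the distinguished place $v_0$ use the linearly independent forms $X_1, \ldots, X_k$ together with $X_1+\cdots+X_k-X_{k+1}$, the latter of which evaluates on $\mathbf{x}_n$ to $\Sigma_n-m_n$ and contributes the decisive factor $\theta^n$ to the Subspace product. At every other $v\in S$ use the coordinate forms $X_1,\ldots,X_{k+1}$. The bookkeeping of $\prod_{v\in S}\prod_i|L_{i,v}(\mathbf{x}_n)|_v$ rests on three inputs: (a) $\prod_{v\in S}|\alpha_i|_v^n=1$, because $|\alpha_i|_v=1$ for $v\notin S$ by the choice of $S$; (b) $\prod_{v\in S}|q_i|_v\leq H(q_i)^{[L:\Q]}\leq e^{[L:\Q]f(n)}$ by the sublinear height bound; and (c) $\prod_{v\in S\setminus\{v_0\}}|m_n|_v\leq H(m_n)^{[L:\Q]}$, with $H(m_n)$ at most exponential in $n$. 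These combine to show the product decays like $\theta^n\cdot e^{o(n)}$, beating $H(\mathbf{x}_n)^{-\epsilon}$ for a sufficiently small $\epsilon>0$ since $H(\mathbf{x}_n)\leq e^{O(n)}$. The Subspace Theorem then forces $\mathbf{x}_n$, outside a finite subset of $\cM$, into a finite union of proper linear subspaces of $L^{k+1}$.

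Each such subspace yields a nontrivial relation $\sum_{i=1}^k c_i q_i\alpha_i^n+c_{k+1}m_n=0$ along an infinite subfamily. When $c_{k+1}=0$, pick an index $j$ with $c_j\neq 0$ and eliminate $q_j\alpha_j^n$ from $\Sigma_n$: this produces $\Sigma_n=\sum_{i\neq j}q_i'\alpha_i^n$ with $q_i'=q_i(1-c_i/c_j)\in L$, whose heights remain sublinear in $n$, and one inducts on $k$ (the base case $k=1$ being essentially Corvaja--Zannier's Theorem~\ref{thm:CZ} adapted to a varying $q_1$). When $c_{k+1}\neq 0$, normalize to obtain $m_n=\sum c_i q_i\alpha_i^n$ and exploit the Galois-invariance $\sigma(m_n)=m_n$ for every $\sigma\in G_{\Q}$: applying $\sigma$ and summing over the full Galois orbit of the tuple $(q_1\alpha_1^n,\ldots,q_k\alpha_k^n)$ realizes $\Sigma_n$ plus the sum over all other distinct conjugates as a rational integer, which is precisely the pseudo-Pisot condition (i). Moreover, for any $\sigma$ and any $i$ with $\sigma(\alpha_i)/\alpha_j\notin\bmmu$ for all $j$, one must have $|\sigma(\alpha_i)|<1$; otherwise the ``missed'' large term $\sigma(q_i)\sigma(\alpha_i)^n$ would contradict the Subspace bound obtained at the archimedean place $v_0\circ\sigma$. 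This simultaneously yields (iii) and (iv).

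Statement (ii)---that each $\alpha_i$ is an algebraic integer---can be derived by running the analogous Subspace argument at non-archimedean places: if some $\alpha_i$ were not integral at a finite place $w$, a suitably twisted linear form at $w$ combined with $m_n\in\Z$ (so $|m_n|_w\leq 1$) would yield a contradiction. The main obstacle is the Subspace Theorem application itself, where the varying factors $q_i$ must be absorbed into the estimate using only the sublinear control $h(q_i)<f(n)$ without sacrificing the exponential decay coming from $\theta^n$; a secondary difficulty is ensuring that the inductive reduction in the case $c_{k+1}=0$ terminates and remains compatible with the non-degeneracy hypothesis on $(\alpha_1,\ldots,\alpha_k)$. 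These modifications of the Corvaja--Zannier approach form the heart of the argument.
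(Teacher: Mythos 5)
Your Subspace Theorem application has a fatal bookkeeping error, and it is precisely the point where the genuine difficulty of the problem lies. You claim the form $X_1+\cdots+X_k-X_{k+1}$ at the single place $v_0$ "contributes the decisive factor $\theta^n$." With the standard normalization $\vert x\vert_{v_0}=\vert \sigma_{v_0}^{-1}(x)\vert^{d(v_0)/[L:\Q]}$, the actual contribution is $\vert\Sigma_n-m_n\vert_{v_0}<\theta^{nd(v_0)/[L:\Q]}$, which is strictly weaker whenever $L\neq\Q$. Meanwhile the factor $\prod_{v\in S}\vert m_n\vert_v$ from the coordinate forms at the remaining places grows like $\vert m_n\vert\approx C^n$ with $C=\max_i\vert\alpha_i\vert$, and this is \emph{not} absorbed. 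A direct computation shows the double product is bounded above only by $\theta^{nd(v_0)/[L:\Q]}\vert m_n\vert^{1-d(v_0)/[L:\Q]}e^{o(n)}H(\tilde{\bfx})^{-(k+1)}$; the prefactor $\theta^{nd(v_0)/[L:\Q]}\vert m_n\vert^{1-d(v_0)/[L:\Q]}$ grows exponentially as soon as $C^{[L:\Q]-d(v_0)}\theta^{d(v_0)}>1$, which happens even in the basic case $k=1$, $q_1=1$, $\alpha_1$ a Pisot unit of degree $\geq 3$. So the inequality $\leq H(\tilde{\bfx})^{-(k+1)-\epsilon}$ required by the Subspace Theorem simply fails in the very cases the theorem must handle.

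What is missing is the key Corvaja--Zannier device, which the paper's Proposition~\ref{prop:technical} implements: since $m_n\in\Z$ is Galois-fixed, the bound $\vert\Sigma_n-m_n\vert<\theta^n$ can be rewritten as a \emph{product over all archimedean places of $L$}, namely $\theta^n>\prod_{v\in M_L^\infty}\vert\sigma_v(\Sigma_n)-m_n\vert_v$, with a \emph{different} linear form (built from Galois conjugates $\sigma_v(\alpha_i)^n$) at each place. To make these forms well-defined and to keep the coefficients Galois-equivariant, the paper first expands each $q_i$ in a $\Q$-basis $\gamma_1,\ldots,\gamma_d$ of $K$ (so the rational coefficients $b_{i,j,\ell}$ pass through every $\sigma$), and enlarges the ambient space to vectors indexed by quadruples $(i,j_1,j_2,\ell)$ so that \emph{all} conjugates of each $\alpha_i$ appear as coordinates. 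Your $(k+1)$-dimensional vector $(q_1\alpha_1^n,\ldots,q_k\alpha_k^n,m_n)$ is too small to carry this structure. Secondarily, the branch $c_{k+1}\neq 0$ of your argument is also underspecified: knowing $m_n=\sum c_iq_i\alpha_i^n$ does not immediately yield the pseudo-Pisot conclusion without first identifying the coefficients $c_i$ via a minimal-dimension argument and applying Proposition~\ref{prop:general SML} and Lemma~\ref{lem:not in mu} to rule out unwanted vanishing sums, which the paper does carefully in the steps following equation~\eqref{eq:relation on M2}.
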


	When $k=1$, properties (i) and (ii) recover the conclusion
	in Theorem~\ref{thm:CZ}.  As is the case with 
	the Corvaja-Zannier theorem, it is not difficult to
	see that properties (i)-(iv) above are sufficient. In fact
	$q_1\alpha_1^n+\ldots+q_k\alpha_k^n$ is 
	a subsum of an integer-valued trace and, 
	according to properties (iii) and (iv), any term that does
	not appear in this subsum must have the form 
	$\sigma(q_i\alpha_i^n)$ with
	$\vert \sigma(\alpha_i)\vert <1$. Now since
	$h(\sigma(q_i))<f(n)=o(n)$, we have that 
	$\vert \sigma(q_i\alpha_i^n)\vert < \theta^n$
	for some $\theta\in (0,1)$ when $n$ is sufficiently large.
	This explains why we have
	$\left\Vert\displaystyle \sum_{i=1}^k q_i\alpha_i^n\right\Vert_{\Z} <\theta^n$.
	
 	\begin{remark}\label{rem:mild condition} 
	As mentioned before, the condition 
	$\vert \alpha_i\vert \geq 1$ for $1\leq i\leq k$ is 
	non-essential. Indeed if, say,
	$\vert\alpha_i\vert <1$ then 
	$\vert q_i\alpha_i^n\vert<\theta^n$
	 for every $\theta\in (\vert\alpha_i\vert,1)$ when $n$ is 
	 sufficiently large. This follows from
	 the condition that $\vert q_i\vert =e^{o(n)}$. Then we can  
	 simply consider the smaller subsum without the term 
	 $q_i\alpha_i^n$. If $\alpha_i$ is a root of unity then (by choosing $j=i$), Property (iii) implies that
	 for all but finitely many $(n,q_1,\ldots,q_k)\in\cM$, $\sigma(q_i\alpha_i^n)=q_i\alpha_i^n$ for 
	 every $\sigma$ which means $q_i\alpha_i^n\in \Q$.
	\end{remark}

	For every polynomial $Q(x)\in \Qbar[x]$ and $\sigma\in G_\Q$,
	let $\sigma(Q)$ be the polynomial obtained by applying
	$\sigma$ to the coefficients of $Q$. For
	every rational function $R(x)\in \Qbar(x)$, write 
	$R(x)=\displaystyle\frac{Q_1(x)}{Q_2(x)}$, then we define
	$\sigma(R)=\displaystyle\frac{\sigma(Q_1)}{\sigma(Q_2)}$. This is 
	independent of the choices of $(Q_1,Q_2)$. 
	 We have the following immediate consequences:
	\begin{corollary}\label{cor:1}
	Let $k\in\N$ and let $(\alpha_1,\ldots,\alpha_k)$ be
	as in Theorem~\ref{thm:main1}. 
	Let $a_n:=R_1(n)\alpha_1^n+\ldots+R_k(n)\alpha_k^n$
	for $n\in\N$ where $R_1(x),\ldots,R_k(x)\in\Qbar(x)\setminus\{0\}$. Assume that for some $\theta\in (0,1)$, 
	the
	set 
	$\cM_{\theta}=\{n\in\N: \Vert a_n\Vert_{\Z}<\theta^n\}$
	is infinite. Then the following hold:
	\begin{itemize}
		\item [(i)] $\alpha_i$ is an algebraic integer for $1\leq i\leq k$.
		\item [(ii)] For $\sigma\in G_{\Q}$ and 
		$1\leq i\leq k$, if $\displaystyle \frac{\sigma(\alpha_i)}{\alpha_j}\notin \bmmu$ for 
		$1\leq j\leq k$ then $\vert\sigma(\alpha_i)\vert <1$. 
		\item [(iii)] For $\sigma\in G_{\Q}$ and $1\leq i,j\leq k$, if 
		$\displaystyle\frac{\sigma (\alpha_i)}{\alpha_j}\in\bmmu$ then the rational function $\displaystyle \frac{R_j}{\sigma(R_i)}$ is constant and 
	$\displaystyle\frac{R_j}{\sigma(R_i)}=\displaystyle\left(\frac{\sigma(\alpha_i)}{\alpha_j}\right)^n$
	for all but finitely many $n\in \cM_{\theta}$.
	\end{itemize}
	\end{corollary}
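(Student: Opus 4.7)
The plan is to deduce Corollary~\ref{cor:1} as a direct specialization of Theorem~\ref{thm:main1} applied with $q_i := R_i(n)$. First, I would choose a number field $K$ containing $\alpha_1,\dots,\alpha_k$ together with all coefficients of $R_1,\dots,R_k$; after discarding the finite set of $n\in\N$ at which some $R_i$ has a pole or a zero, each $R_i(n)$ lies in $K^*$ on the (still cofinite) remainder of $\cM_\theta$. A standard estimate for the Weil height of a rational function evaluated at an integer gives $h(R_i(n)) = O(\log n)$, so one may pick a sublinear $f\colon\N\to(0,\infty)$ with $\max_i h(R_i(n)) < f(n)$ for all large $n$.

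With this setup, the map $n \mapsto (n, R_1(n),\dots,R_k(n))$ carries the truncated $\cM_\theta$ into the set $\cM$ of Theorem~\ref{thm:main1}; since $\cM_\theta$ is infinite, so is its image. Applying Theorem~\ref{thm:main1}, all four of its conclusions hold for all but finitely many $n \in \cM_\theta$. Parts (i) and (ii) of the corollary are then immediate translations of parts (ii) and (iv) of Theorem~\ref{thm:main1}, respectively.

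For part (iii), fix $\sigma \in G_\Q$ and indices $1\leq i,j\leq k$ with $\zeta := \sigma(\alpha_i)/\alpha_j \in \bmmu$. Applying part (iii) of Theorem~\ref{thm:main1} to the tuple $(q_1,\dots,q_k)=(R_1(n),\dots,R_k(n))$ yields $\sigma(R_i)(n)\,\sigma(\alpha_i)^n = R_j(n)\,\alpha_j^n$, equivalently
$$\frac{R_j(n)}{\sigma(R_i)(n)} = \zeta^n,$$
for all but finitely many $n \in \cM_\theta$. Since $\zeta$ has finite order, $\zeta^n$ takes only finitely many values as $n$ varies, so by pigeonhole some value $c$ is attained on an infinite subset of such $n$. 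Hence the rational function $R_j/\sigma(R_i) - c$ vanishes at infinitely many integers and is therefore identically zero, proving $R_j/\sigma(R_i) \equiv c$. Substituting back then gives the identity $R_j/\sigma(R_i) = (\sigma(\alpha_i)/\alpha_j)^n$ for all but finitely many $n \in \cM_\theta$.

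Beyond Theorem~\ref{thm:main1} itself, the only substantive verification is the height bound $h(R_i(n)) = O(\log n)$, needed to land within the sublinear-growth hypothesis of that theorem; everything else is a formal consequence, combined with the rigidity of rational functions (a nonzero rational function has only finitely many integer zeros) in the final step.
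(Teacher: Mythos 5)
Your proof is correct and follows essentially the same route as the paper: specialize Theorem~\ref{thm:main1} with $q_i=R_i(n)$ (using the $O(\log n)$ height bound for rational functions at integers to get the sublinear condition), read off (i) and (ii) from parts (ii) and (iv) of the theorem, and deduce (iii) from part (iii) of the theorem. The paper's proof of (iii) is a one-liner, while you correctly fill in the pigeonhole-on-$\zeta^n$ and finitely-many-zeros argument that the paper leaves implicit.
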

	
	For an arbitrary linear recurrence sequence, we
	obtain the following result in the style of the classical
	Skolem-Mahler-Lech theorem.
	\begin{corollary}\label{cor:2}
	Let $(a_n)_{n\in\N}$ be an arbitrary linear recurrence 
	sequence over $\Qbar$. Assume that for some 
	$\theta\in (0,1)$, the set $\cM_{\theta}$ as defined in 
	Corollary~\ref{cor:1} is infinite. Then there is $\theta_0\in (0,1)$ such that for every $\tilde{\theta}\in (\theta_0,1)$, the set $\cM_{\tilde{\theta}}$ is
	the union of a finite set and finitely many arithmetic
	progressions.
	\end{corollary}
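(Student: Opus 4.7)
The plan is to deduce Corollary \ref{cor:2} from Corollary \ref{cor:1} by restricting to arithmetic progressions on which the recurrence becomes non-degenerate, and then showing that the structural conclusions of Corollary \ref{cor:1} force $\|a_n\|_{\Z}$ to decay exponentially on each such progression. Write $a_n$ in its standard form $a_n=\sum_{i=1}^s Q_i(n)\gamma_i^n$ with the $\gamma_i\in\Qbar^*$ distinct and each $Q_i\in\Qbar[x]\setminus\{0\}$. Terms with $|\gamma_i|<1$ contribute at most $C\rho^n$ for some fixed $\rho<1$, so after absorbing them into the error (at the cost of enlarging $\tilde\theta$) we may assume $|\gamma_i|\ge 1$ for every $i$. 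Choose $m\in\N$ divisible by the order of every root of unity appearing as some $\gamma_i/\gamma_j$; restriction to each residue class $n=r+m\tilde n$ and the regrouping described after Definition \ref{def:non-degenerate} puts $a_{r+m\tilde n}$ in the form $\sum_l \tilde Q_l^{(r)}(\tilde n)\beta_l^{\tilde n}$, with $(\beta_l)$ non-degenerate and each $\tilde Q_l^{(r)}\in\Qbar[\tilde n]$.

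Classify each residue class $r$. If every $\tilde Q_l^{(r)}$ vanishes identically, the class is contained in $\cM_{\tilde\theta}$ for every $\tilde\theta<1$. Call the class \emph{good} if $\cM_{\tilde\theta_0}\cap(r+m\N)$ is infinite for some $\tilde\theta_0<1$, and \emph{bad} otherwise; by definition, bad classes contribute only a finite set to $\cM_{\tilde\theta}$ for each $\tilde\theta<1$. On a good class, the bound $\|a_n\|_{\Z}<\tilde\theta_0^n$ rewrites in the $\tilde n$ variable as $\|a_{r+m\tilde n}\|_{\Z}<(\tilde\theta_0^m)^{\tilde n}$ for large $\tilde n$, so Corollary \ref{cor:1} applies to the non-degenerate recurrence and yields its three structural conclusions on $(\beta_l,\tilde Q_l^{(r)})$. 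I then further refine $m$ so that each root of unity $\sigma(\beta_l)/\beta_{l'}$ appearing in condition (iii) becomes a fixed constant on each sub-residue class; on the sub-class, condition (iii) becomes the genuine algebraic identity $\sigma(\tilde Q_l^{(r)}(\tilde n))\sigma(\beta_l)^{\tilde n}=\tilde Q_{l'}^{(r)}(\tilde n)\beta_{l'}^{\tilde n}$ valid for every $\tilde n$, and condition (ii) forces every remaining Galois conjugate to have modulus bounded by $C\tilde n^D\rho^{\tilde n}$ for some $\rho<1$.

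The key step is the integrality argument. Let $S_{\tilde n}$ denote the sum over the Galois orbit of the tuple $(\tilde Q_l^{(r)}(\tilde n)\beta_l^{\tilde n})_l$, so $a_{r+m\tilde n}$ is a sub-sum of $S_{\tilde n}$ with missing terms of size $O(\tilde n^D\rho^{\tilde n})$. The quantity $S_{\tilde n}$ is Galois-invariant, hence rational, and is itself a $\Q$-valued linear recurrence; for all but finitely many $\tilde n$ with $r+m\tilde n\in\cM_{\tilde\theta_0}$, Theorem \ref{thm:main1} applies (since the coefficients $\tilde Q_l^{(r)}(\tilde n)$ have height $O(\log \tilde n)$) and yields that $(\tilde Q_l^{(r)}(\tilde n)\beta_l^{\tilde n})$ is pseudo-Pisot, hence $S_{\tilde n}\in\Z$. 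Clearing a common denominator $N$ makes $NS_{\tilde n}$ an integer linear recurrence whose residues modulo $N$ are ultimately periodic, so $\{\tilde n:S_{\tilde n}\in\Z\}$ is a finite union of arithmetic progressions plus a finite set; its infinitude on the sub-class forces it to contain a full sub-arithmetic progression. On that sub-progression, $\|a_{r+m\tilde n}\|_{\Z}\le|a_{r+m\tilde n}-S_{\tilde n}|=O(\tilde n^D\rho^{\tilde n})$, which yields $\|a_n\|_{\Z}<(\theta_0^{(r)})^n$ for some explicit $\theta_0^{(r)}<1$ and all $n$ large on that sub-progression.

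Setting $\theta_0:=\max_r\theta_0^{(r)}$ over the finitely many good sub-progressions, for any $\tilde\theta\in(\theta_0,1)$ the set $\cM_{\tilde\theta}$ decomposes as a finite union of arithmetic progressions (from the good sub-progressions and the identically-zero classes) together with a finite exceptional set (from the bad classes and the initial segments of the good sub-progressions where the asymptotic bound has not yet taken effect). The main technical obstacle is precisely the integrality step: passing from pseudo-Pisot integrality on the a priori unstructured infinite set $\cM_{\tilde\theta_0}\cap(r+m\N)$ to integrality on a full arithmetic progression, which requires the modulus refinement together with the Skolem--Mahler--Lech-style modular argument for the rational linear recurrence $S_{\tilde n}$.
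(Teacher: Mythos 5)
Your proposal follows the same overall strategy as the paper: reduce to non-degenerate residue classes, apply Corollary~\ref{cor:1} to obtain the structural conclusions, form the Galois-orbit sum (your $S_{\tilde n}$, the paper's $b_n$), observe it is rational with bounded denominator, and exploit periodicity of the linear recurrence modulo that denominator together with the geometric decay of the extra conjugates. The argument is essentially sound, but two points deserve scrutiny.

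First, the integrality step is where you diverge from the paper, and you take the long way around. You re-invoke Theorem~\ref{thm:main1} to deduce $S_{\tilde n}\in\Z$ from the pseudo-Pisot conclusion on an infinite subset. The paper's route is more direct and, crucially, gives the \emph{two-sided} equivalence that your write-up only half addresses. Since $NS_{\tilde n}\in\Z$ for \emph{every} $\tilde n$ (not just on the infinite set), if $S_{\tilde n}\notin\Z$ then $\Vert S_{\tilde n}\Vert_\Z\ge 1/N$; combined with $|a_{r+m\tilde n}-S_{\tilde n}|=O(\tilde n^D\rho^{\tilde n})$ this forces $\Vert a_{r+m\tilde n}\Vert_\Z$ to be bounded away from $0$, hence $r+m\tilde n\notin\cM_{\tilde\theta}$ for $\tilde n$ large. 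This elementary observation shows, for $\tilde\theta\in(\theta_0,1)$ and $n$ large, that $n\in\cM_{\tilde\theta}$ if and only if $S_{\tilde n}\in\Z$, which is exactly the union-of-progressions set you identified. Your version establishes the inclusion $\{S_{\tilde n}\in\Z\}\subseteq\cM_{\tilde\theta}$ (up to finite sets) via the decay estimate, but the reverse inclusion $\cM_{\tilde\theta}\subseteq\{S_{\tilde n}\in\Z\}$ is only treated for the fixed $\tilde\theta_0$ via pseudo-Pisot; you need it for every $\tilde\theta\in(\theta_0,1)$. One can repair this by applying Theorem~\ref{thm:main1} for each such $\tilde\theta$, but the $1/N$-bound argument dispatches it in one line and avoids the heavy machinery.

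Second, the ``further refine $m$ so that the roots of unity $\sigma(\beta_l)/\beta_{l'}$ become fixed constants'' step is unnecessary. Once you have restricted so that (P1) and (P2) of Section~\ref{sec:technical} hold, Lemma~\ref{lem:not in mu} shows that any such root of unity is already $1$, so part~(iii) of Corollary~\ref{cor:1} reduces to the genuine Galois-equivariance $\sigma(Q_{i,j_1})=Q_{i,j_2}$ with no extra modulus refinement. This is how the paper directly obtains the Galois-invariance of $b_n$. Your extra refinement is harmless but superfluous.

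Aside from these, the decomposition into good/bad/identically-zero residue classes, the bound $\theta_0:=\max_r\theta_0^{(r)}$, and the invocation of the SML-style periodicity for $NS_{\tilde n}$ modulo $N$ all match the paper's intent and are correct.
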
	   
	   
 The Subspace Theorem
 obtained by Schmidt and extended by Schlickewei and Evertse
 (see \cite{Schmidt72}, \cite{ESquantitative}, \cite[Chapter~7]{BG} and the references there)
 plays a crucial role in the proof of our results. 
 Our applications
 of the Subspace Theorem are based on the paper 
 \cite{CZ-ActaMath2004} together with several modifications. 
 We conclude this introduction with a brief 
 comparison to the compelling results of \cite{CZ-ActaMath2004}.
 The paper of Corvaja-Zannier settles the case $k=1$
 and
 $q_1$ is of the form $q\delta$ where $q\in\Z$
 and $\delta$ is a fixed algebraic number. In this paper, 
 we investigate an arbitrary sum ($k\geq 1$) and
 allow the  $q_i$'s to be in an arbitrary number field.
 While the key construction of
 linear forms in \cite[Lemma~3]{CZ-ActaMath2004} is
 rather neat, the construction in this paper is a bit more 
	complicated due to the extra generality noted above and so  requires
	some technical modifications (see Section~\ref{sec:technical}).

	{\bf Acknowledgments.} We wish to thank Professor Pietro Corvaja for useful comments. The third author is grateful to Professor Umberto 
	Zannier for useful conversations at
	the Mathematical Sciences Research Institute in March 2014.	
	The first author is partially supported by NSERC and the third author is partially supported by a UBC-PIMS fellowship.
	
	\section{Some classical applications of the Subspace Theorem}\label{sec:Subspace}	
  	Let $M_{\Q}=M_{\Q}^{\infty}\cup M_{\Q}^0$ where
  	$M_{\Q}^0$ is the set of $p$-adic valuations
  	and $M_{\Q}^{\infty}$
  	is the singleton consisting of the usual archimedean 
  	valuation. More generally, for every number field $K$,
  	write $M_K=M_K^\infty\cup M_K^0$ where
  	$M_K^\infty$ is the set of archimedean places and 
  	$M_K^0$ is the set of finite places. For every
  	$w\in M_K$, let $K_w$ denote the completion of 
  	$K$ with respect to $w$ and denote
  	$d(w)=[K_w:\Q_v]$ where $v$ is the restriction of
  	$w$ to $\Q$. Following \cite[Chapter~1]{BG},
  	for every $w\in M_K$ restricting to $v$ on $\Q$,
  	we normalize $\vert \cdot\vert_w$ as follows:
  	$$\vert x\vert_w = \vert \Norm_{K_w/\Q_v}(x) \vert_v^{1/[K:\Q]}.$$
  	Let $m\in\N$, for every vector $\bfx=(x_0,\ldots,x_m)\in K^{m+1}\setminus\{\mathbf 0\}$, let $\tilde{\bfx}$ denote
  	the corresponding point in $\bP^{m}(K)$. For every $w\in M_K$,
  	denote $\vert \bfx\vert_w:=\displaystyle\max_{0\leq i\leq m} \vert x_i\vert_w$.
  	For $P \in \bP^m(\Qbar)$, let $K$ be a number
  	field such that $P=\tilde{\bfx}$
  	for some $\bfx\in K^{m+1}\setminus\{\mathbf 0\}$
  	and define:
  	$$H(P)=\prod_{w\in M_K} \vert \bfx\vert_w.$$
  	Define $h(P)=\log (H(P))$. For $\alpha\in K$, 
  	write $H(\alpha):=H([\alpha:1])$
  	and write $h(\alpha):=\log(H(\alpha))$.

  	Now we present the celebrated Subspace Theorem.
  	The readers are referred to \cite[Chapter~7]{BG} and
  	the references there for more details.
 	\begin{theorem}\label{thm:Subspace}
 	Let $n\in\N$, let $K$ be a number field,
 	and let $S\subset M_K$ be finite. For every $v\in S$, let 
 	$L_{v0},\ldots,L_{vn}$ be linearly independent 
 	linear forms in the variables $X_0,\ldots,X_n$
 	with $K$-algebraic coefficients in $K_v$. For every 
 	$\epsilon>0$, the
 	solutions $\bfx\in K^{n+1}\setminus\{\mathbf{0}\}$
 	of the inequality:
 	$$\displaystyle\prod_{v\in S}\prod_{j=0}^n \displaystyle\frac{\vert L_{vj}(\bfx)\vert_v}{\vert \bfx\vert_v}\leq H(\tilde{\bfx})^{-n-1-\epsilon}$$
 	are contained in finitely many hyperplanes of
 	$K^{n+1}$.
 	\end{theorem}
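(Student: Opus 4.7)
The plan is to follow the classical geometry-of-numbers approach developed by Schmidt over $\Q$ and extended by Schlickewei and Evertse to arbitrary number fields with both archimedean and non-archimedean places; the detailed exposition in \cite[Chapter~7]{BG} is the natural template. Structurally, the argument is by induction on $n$: the base case $n=1$ is Roth's theorem together with its Ridout and Lang $v$-adic refinements, and the inductive step proceeds by contradiction. One assumes there is an infinite family of primitive solutions $\bfx$ not contained in any finite union of proper hyperplanes and produces an auxiliary polynomial that such a family would force to vanish to a forbidden order.

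First I would reduce to an integer-lattice problem by restricting scalars, replacing $K$ with $\Q$ and enlarging $S$ accordingly; the linear forms $L_{vj}$ are renormalized so that the coefficient matrix $(L_{vj})_j$ has $v$-adic determinant of absolute value $1$ for each $v\in S$, and solutions are rescaled to primitive integer points. For each $v\in S$ and each candidate solution one then invokes Minkowski's second theorem on successive minima for the parallelepiped $\{\bfy : |L_{vj}(\bfy)|_v \le c_{vj}\}$, where the weights $c_{vj}$ are chosen so that $\prod_{v,j}c_{vj}$ matches $H(\tilde{\bfx})^{-n-1-\epsilon}$. A pigeonhole over the combinatorial successive-minima type reduces the problem to an infinite subfamily in which all solutions behave uniformly.

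The core of the proof is the construction, via Siegel's lemma, of a nonzero multihomogeneous polynomial $P(\bfX^{(1)},\ldots,\bfX^{(M)})$ of controlled height and of degree $d_j$ in $\bfX^{(j)}$, where the parameters $M$ and $d_j$ are tuned to the problem. Evaluating at $M$ alleged solutions $\bfx^{(1)},\ldots,\bfx^{(M)}$ of very rapidly increasing height, the smallness of every $|L_{vj}(\bfx^{(i)})|_v$ forces $P$ to vanish to large weighted \emph{index} at the tuple, while Schmidt's index theorem --- or, more transparently, Faltings' product theorem --- bounds this index from above. The contradiction forces the $\bfx^{(j)}$ to satisfy a nontrivial algebraic relation, and an elementary covering argument translates this back into the claim that all but finitely many solutions lie in a finite union of proper hyperplanes. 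The hardest step is the quantitative balancing: $M$, the $d_j$, the height thresholds for the $\bfx^{(j)}$, and the weights $c_{vj}$ must be chosen so that the Siegel upper bound on the attainable index actually beats the lower bound coming from the smallness of the $|L_{vj}(\bfx^{(i)})|_v$. Handling non-archimedean $v$ and passing from $\Q$ to $K$ are mostly bookkeeping with the normalized absolute values $|\cdot|_w$, but it is precisely in that bookkeeping that the Schlickewei--Evertse refinements do their real work.
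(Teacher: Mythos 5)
The paper does not prove Theorem~\ref{thm:Subspace}: it is stated as a known result (Schmidt's Subspace Theorem in the generalized form due to Schlickewei and Evertse) with a pointer to \cite[Chapter~7]{BG} and references therein. So there is no ``paper's own proof'' to compare against; the theorem is a black box imported into this paper, just as Roth's theorem was a black box in Mahler's original argument.

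Taken on its own terms, your sketch identifies the right ingredients --- reduction to $\Q$ by restriction of scalars, Minkowski's second theorem and the combinatorics of successive minima, Siegel's lemma producing an auxiliary multihomogeneous polynomial, the lower bound on the index from the smallness of the $|L_{vj}(\bfx^{(i)})|_v$, and the upper bound from Roth's lemma or the product theorem, with the final pigeonhole/covering step. But the framing ``by induction on $n$, with $n=1$ being Roth's theorem'' misdescribes the structure. Roth's theorem is the \emph{special case} $n=1$, not a base case that the general proof inductively bootstraps from; the Thue--Siegel--Roth machinery is run directly in $n+1$ variables, and the genuinely new ingredient for $n\geq 2$ is the geometry-of-numbers analysis of the adelic parallelepipeds (Davenport's lemma, the grid/successive-minima argument), which has no counterpart in the $n=1$ case. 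You should also be aware that the step ``pigeonhole over the successive-minima type to reduce to a uniformly behaved subfamily'' hides a substantial amount of Schmidt's work (the so-called ``Davenport'' and ``Mahler'' lemmas and the reduction to the case where one minimum dominates), and that passing from a nontrivial algebraic relation among the $\bfx^{(j)}$ back to containment in finitely many \emph{hyperplanes} (rather than a lower-dimensional subvariety) requires a further argument, typically a second application of the whole machinery in one fewer variable. None of this undermines the plan, but it is more than ``bookkeeping.''

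For the purposes of this paper, though, none of that matters: Theorem~\ref{thm:Subspace} is being cited, not proved, and the appropriate move is exactly what the authors did --- state it and reference the literature.
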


    Variants of the following applications of the 
    Subspace Theorem are
  	well-known. We include the proofs due to the lack of an 
  	immediate reference.
  	\begin{proposition}\label{prop:general SML}
  	Let $(\alpha_1,\ldots,\alpha_k)$ be a non-degenerate 
  	tuple of non-zero algebraic numbers, let $f$ be a sublinear function, and let $K$ be a number field. Then there are 
  	only finitely many tuples $(n,b_1,\ldots,b_k)\in \N\times (K^*)^k$ satisfying:
  	$$b_1\alpha_1^n+\ldots+b_k\alpha_k^n=0\ \text{and}\ \max_{1\leq i\leq k}h(b_i)<f(n).$$
  	\end{proposition}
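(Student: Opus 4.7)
We argue by induction on $k$. The case $k=1$ is trivial, and $k=2$ is nearly so: the relation $b_1\alpha_1^n+b_2\alpha_2^n=0$ forces $h(b_1/b_2)=n\cdot h(\alpha_2/\alpha_1)$, while non-degeneracy together with Kronecker's theorem gives $h(\alpha_2/\alpha_1)>0$, contradicting $h(b_1/b_2)\le 2f(n)=o(n)$. So fix $k\ge 3$ and suppose, for contradiction, there are infinitely many solutions. A pigeonhole on the finitely many proper subsets $I\subsetneq\{1,\ldots,k\}$ shows that either (a) some fixed $I$ satisfies $\sum_{i\in I}b_i\alpha_i^n=0$ for infinitely many of the given solutions---in which case the inductive hypothesis applied to the non-degenerate sub-tuple $(\alpha_i)_{i\in I}$ gives a contradiction---or (b) after passing to an infinite subsequence $(n_\ell,b_{1,\ell},\ldots,b_{k,\ell})$ with $n_\ell\to\infty$, no proper subsum of $\sum_i b_{i,\ell}\alpha_i^{n_\ell}$ vanishes. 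Assume (b).

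Enlarge $K$ so that $\alpha_1,\ldots,\alpha_k\in K$ and pick $S\subset M_K$ finite, containing all archimedean places and all finite places at which some $\alpha_i$ is a non-unit. Set $\bfy_\ell:=(b_{1,\ell}\alpha_1^{n_\ell},\ldots,b_{k-1,\ell}\alpha_{k-1}^{n_\ell})\in K^{k-1}$; the equation gives $b_{k,\ell}\alpha_k^{n_\ell}=-\sum_{j<k}b_{j,\ell}\alpha_j^{n_\ell}$. A second pigeonhole on the finite set $S$ lets us fix, for each $v\in S$, an index $i(v)\in\{1,\ldots,k\}$ realizing $\max_j|\alpha_j|_v$. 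At each $v\in S$ we then take $k-1$ linearly independent linear forms: $\{Y_j:j\ne i(v)\}\cup\{Y_1+\cdots+Y_{k-1}\}$ if $i(v)<k$, and $\{Y_1,\ldots,Y_{k-1}\}$ if $i(v)=k$. The key point is that $Y_1+\cdots+Y_{k-1}$ evaluates on $\bfy_\ell$ to $-b_{k,\ell}\alpha_k^{n_\ell}$, which at the places with $i(v)<k$ is smaller than the dominant coordinate $|\bfy_\ell|_v=|b_{i(v),\ell}\alpha_{i(v)}^{n_\ell}|_v$ by a factor $(|\alpha_k|_v/|\alpha_{i(v)}|_v)^{n_\ell}$, up to $e^{o(n_\ell)}$.

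Combining the product formula $\sum_v\log|\alpha_j|_v=0$ with $|\alpha_j|_v=1$ outside $S$, and absorbing the $b$-contributions via $\sum_{v\in S}|\log|b_{j,\ell}|_v|\le 2h(b_{j,\ell})=o(n_\ell)$, a direct case-by-case telescoping yields
\[
\log\prod_{v\in S}\prod_{j=1}^{k-1}\frac{|L_{v,j}(\bfy_\ell)|_v}{|\bfy_\ell|_v}=-(k-1)\log H(\tilde{\bfy}_\ell)-n_\ell\cdot h([\alpha_1:\cdots:\alpha_k])+o(n_\ell),
\]
while $\log H(\tilde{\bfy}_\ell)=n_\ell\cdot h([\alpha_1:\cdots:\alpha_{k-1}])+o(n_\ell)$. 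Both projective heights are strictly positive by non-degeneracy, so for any sufficiently small $\epsilon>0$ the Subspace Theorem hypothesis $\prod\le H(\tilde{\bfy}_\ell)^{-(k-1)-\epsilon}$ holds for $\ell$ large. Theorem~\ref{thm:Subspace} then forces the $\tilde{\bfy}_\ell$ into a finite union of proper linear subspaces of $K^{k-1}$; an infinite sub-subsequence contained in one such subspace yields fixed $(c_1,\ldots,c_{k-1})\ne\mathbf 0$ with $\sum_{i=1}^{k-1}(c_ib_{i,\ell})\alpha_i^{n_\ell}=0$. Since $h(c_ib_{i,\ell})\le h(c_i)+h(b_{i,\ell})$ is still sublinear and $J:=\{i:c_i\ne 0\}$ satisfies $|J|\le k-1<k$, the inductive hypothesis applied to $(\alpha_i)_{i\in J}$ leaves only finitely many such $\ell$, the desired contradiction.

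The principal technical obstacle is the telescoping asymptotic for $\log\prod$: one must split the calculation at each $v\in S$ according to whether $i(v)<k$ or $i(v)=k$, invoke the per-$\alpha_j$ product formula to cancel partial sums $\sum_{v\in S}\log|\alpha_j|_v$, and recognize the residual contribution as the projective-height difference $h([\alpha_1:\cdots:\alpha_k])-h([\alpha_1:\cdots:\alpha_{k-1}])$. Once this computation is secured, the Subspace Theorem and the inductive hypothesis deliver the contradiction mechanically.
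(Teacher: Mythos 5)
Your proof is correct and follows essentially the same route as the paper: project out the $k$-th coordinate, at each $v\in S$ choose $k-1$ linear forms that exploit the relation $\sum_j b_j\alpha_j^n=0$, bound the Subspace product via the product formula and $S$-unit property, apply Theorem~\ref{thm:Subspace}, and reduce $k$ by induction (the paper phrases this as a minimal counterexample). The one genuine variation is your choice of $i(v)$: the paper takes $i(v)\le k-1$ realizing $\max_{j<k}|x_j|_v$, which is data-dependent and therefore requires a pigeonhole over $(k-1)^{|S|}$ choices, whereas you take $i(v)\in\{1,\dots,k\}$ realizing $\max_{j\le k}|\alpha_j|_v$, which depends only on the fixed data (your ``second pigeonhole'' is really just a choice when ties occur) but forces a separate treatment at places with $i(v)=k$, where you use only coordinate forms. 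Your telescoping identity $\log\prod_{v,j}\bigl(|L_{v,j}(\bfy)|_v/|\bfy|_v\bigr)=-(k-1)\log H(\tilde{\bfy})-n\,h([\alpha_1:\cdots:\alpha_k])+o(n)$ checks out and is a cleaner, slightly sharper accounting than the paper's chain of inequalities culminating in exponent $-k+\epsilon$; both suffice to reach the required threshold $-(k-1)-\epsilon$. Your preliminary split into cases (a)/(b) on vanishing proper subsums is harmless but not actually used in the Subspace step.
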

  	\begin{proof}
  	Assume there is a counter-example given by $(K,f,k,\alpha_1,\ldots,\alpha_k)$ where $k$ is minimal. This implies  that there are infinitely many such tuples $(n,b_1,\ldots,b_k)$. By the height inequality on the $b_i$'s and the Northcott  property, we must have that $n$ is unbounded in any infinite collection of such tuples.
  	By extending
  	$K$ if necessary, we can choose a finite subset $S$ of $M_K$ containing $M_K^{\infty}$ such that 
  	$\alpha_i$ is an $S$-unit for $1\leq i\leq k$. We will prove that 
  	$k\geq 3$; obviously $k>1$. If $k=2$, we have
  	$\displaystyle\frac{b_1}{b_2}=-\left(\frac{\alpha_2}{\alpha_1}\right)^n$. By non-degeneracy, 
  	$h(\alpha_2 / \alpha_1)>0$ while $h(b_1/b_2)<2f(n)=o(n)$
  	which gives a contradiction when $n$ is sufficiently large.

  	Write $\bfx=(x_1,\ldots,x_{k-1}):=(b_1\alpha_1^n,\ldots,b_{k-1}\alpha_{k-1}^n)$.
  	For $v\in S$, choose $i(v)$ with $1\leq i(v)\leq k-1$
  	such that $\vert \bfx\vert_v=\vert x_{i(v)}\vert_v$. There
  	are $(k-1)^{\vert S\vert}$ possibilities
  	for the choices of the $i(v)$'s, hence we may assume that
  	some \emph{fixed} choice is used for infinitely many
  	tuples $(n,b_1,\ldots,b_k)$, from now on we will focus on 
  	these tuples only. For $v\in S$ and $1\leq j\leq k-1$,
  	define:
  	$$L_{vj}(X_1,\ldots,X_{k-1})=\twopartdef{X_j}{j\neq i(v)}{X_1+\ldots+X_{k-1}}{j=i(v)}$$
  	
  	Since the $\alpha_i$'s are $S$-units, by the product
  	formula, we have:
  	\begin{equation}\label{eq:Subspace SML0}
  	\prod_{v\in S}\prod_{j=1}^{k-1}\displaystyle\frac{\vert L_{vj}(\bfx)\vert_v}{\vert\bfx\vert_v}
  	=\frac{\displaystyle\prod_{v\in S}\prod_{j=1}^{k}\vert b_j\alpha_j^n\vert_v}{\left(\displaystyle\prod_{v\in S} \vert \bfx\vert_v\right)^k}\leq \left(\displaystyle\prod_{v\in S} \vert \bfx\vert_v \right)^{-k}\prod_{j=1}^k H(b_j).  	
  	\end{equation} 
  	
  	Again, using the fact that the $\alpha_{i}$'s are
  	$S$-units, we have:
  	\begin{equation}\label{eq:Subspace SML1}
  		\displaystyle\prod_{v\in S} \vert \bfx\vert_v=\frac{H(\tilde{\bfx})}{\displaystyle\prod_{v\notin S}\max_{1\leq j\leq k-1}\vert b_j\vert_v}\geq\frac{H(\tilde{\bfx})}{\displaystyle\prod_{j=1}^{k-1} H(b_j)}. 
  	\end{equation}
  	
  	From \eqref{eq:Subspace SML0} and \eqref{eq:Subspace SML1},
  	we have:
  	\begin{equation}\label{eq:Subspace SML2}
  	\prod_{v\in S}\prod_{j=1}^{k-1}\displaystyle\frac{\vert L_{vj}(\bfx)\vert_v}{\vert\bfx\vert_v}\leq H(\tilde{\bfx})^{-k}\left(\prod_{j=1}^k H(b_j)\right)^{k+1}.
  	\end{equation}
  	
  	Write $C=H([\alpha_1:\ldots:\alpha_{k-1}])$ then $C>1$ thanks to non-degeneracy of $(\alpha_1,\ldots,\alpha_{k-1})$. We have:
  	\begin{equation}\label{eq:height1 SML}
	\begin{split}
	\frac{H([\alpha_1^n:\ldots:\alpha_{k-1}^n])}{H([b_1^{-1}:\ldots:b_{k-1}^{-1}])}&\leq H(\tilde{\bfx})=H([b_1\alpha_1^n:\ldots:b_{k-1}\alpha_{k-1}^n])\\
	&\leq H([\alpha_1^n:\ldots:\alpha_{k-1}^n])H([b_1:\ldots:b_{k-1}]) 
	\end{split}
	\end{equation}
	which implies
	\begin{equation}\label{eq:height2 SML} 
	C^n\left(\displaystyle\prod_{j=1}^k H(b_j)\right)^{-1}\leq H(\tilde{\bfx})\leq C^n\displaystyle\prod_{j=1}^k H(b_j).
	\end{equation}

  	Since $\displaystyle\prod_{j=1}^k H(b_j)\leq e^{kf(n)}=e^{o(n)}$, inequalities \eqref{eq:Subspace SML2}
  	and \eqref{eq:height2 SML} give that for every $\epsilon\in (0,1/2)$
  	we have:
  	$$\prod_{v\in S}\prod_{j=1}^{k-1}\frac{\vert L_{vj}(\bfx)\vert_v}{\vert\bfx\vert_v}\leq H(\tilde{\bfx})^{-k+\epsilon}\leq H(\tilde{\bfx})^{-k+1-\epsilon}$$ 
  	when $n$ is sufficiently large. Hence Theorem~\ref{thm:Subspace}  
  	gives that there exist a subset $\{i_1,\ldots,i_{\ell}\}$
  	of $\{1,\ldots,k-1\}$ and $c_{i_1},\ldots,c_{i_\ell}\in K^*$
  	such that:
  	$$c_{i_1}b_{i_1}\alpha_{i_1}^n+\ldots+c_{i_\ell}b_{i_\ell}\alpha_{i_\ell}^n=0$$
  	for infinitely many tuples $(n,b_{i_1},\ldots,b_{i_\ell})$.
  	By modifying the sublinear function $f$, 
  	the infinitely many tuples $(n,c_{i_1}b_{i_1},\ldots,c_{i_\ell}b_{i_\ell})$ yield that there is a counter-example
  	with parameter $\ell\leq k-1<k$, contradicting the
  	minimality of $k$. 
  	\end{proof}
  	
  	The next result is a slight modification
  	of \cite[Lemma~1]{CZ-ActaMath2004}:
  	\begin{proposition}\label{prop:CZLemma1}
  	Let $k\in\N$, let $K$ be
  	a number field, let $S$ be a finite subset of $M_K$ containing $M_K^{\infty}$, and let $\lambda_1,\ldots,\lambda_k$ be non-zero elements of $K$. Fix $w\in S$
  	and $\epsilon>0$. Let $\scrS$ be an infinite set of
  	solutions $(u_1,\ldots,u_k,b_1,\ldots,b_k)$
  	of the inequality:
  	$$\left\vert \displaystyle\sum_{j=1}^k\lambda_jb_ju_j\right\vert_w\leq \max\{\vert b_1u_1\vert_w,\ldots,\vert b_ku_k\vert_w\}(\displaystyle\prod_{j=1}^k H(b_j))^{-k-1-\epsilon}H([u_1:\ldots:u_k:1])^{-\epsilon}$$
  	where $u_j$ is an $S$-unit and $b_j\in K^*$ for $1\leq j\leq k$. Then there exists a non-trivial
  	linear relation of the form $c_1b_1u_1+\ldots+c_kb_ku_k=0$
  	where $c_i\in K$ for $1\leq i\leq k$ satisfied
  	by infinitely many elements of $\scrS$.
  	\end{proposition}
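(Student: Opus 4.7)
The plan is to apply the Subspace Theorem (Theorem~\ref{thm:Subspace}) to the points $\tilde{\bfx}\in\bP^{k-1}(K)$ determined by the vectors $\bfx:=(b_1u_1,\ldots,b_ku_k)\in(K^*)^k$, where the defining linear forms encode the hypothesis at the distinguished place $w$.

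First, for each $v\in S$ choose $i(v)\in\{1,\ldots,k\}$ realizing $|\bfx|_v=|b_{i(v)}u_{i(v)}|_v$; by pigeonhole we may pass to an infinite subset of $\scrS$ on which $i(w)$ is fixed. Then define $k$ linearly independent forms at each $v\in S$ by setting $L_{vj}(X_1,\ldots,X_k)=X_j$ whenever $v\neq w$ or $j\neq i(w)$, and $L_{w,i(w)}(X)=\lambda_1X_1+\cdots+\lambda_kX_k$; linear independence at $w$ is a consequence of $\lambda_{i(w)}\neq 0$.

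Next, estimate the Subspace product. Factoring as
$$\prod_{v\in S}\prod_{j=1}^k|L_{vj}(\bfx)|_v=\frac{\left|\sum_j\lambda_jb_ju_j\right|_w}{|b_{i(w)}u_{i(w)}|_w}\cdot\prod_{v\in S}\prod_j|b_ju_j|_v,$$
the first factor is bounded by $(\prod_jH(b_j))^{-k-1-\epsilon}H([u_1:\ldots:u_k:1])^{-\epsilon}$ directly from the hypothesis; the second factor equals $\prod_j\prod_{v\in S}|b_j|_v$ since each $u_j$ is an $S$-unit, and the product formula gives the elementary bound $\prod_{v\in S}|b_j|_v=\prod_{v\notin S}|b_j|_v^{-1}\leq H(b_j^{-1})=H(b_j)$. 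An analogous argument for $\bfx$ produces $\prod_{v\notin S}|\bfx|_v=\prod_{v\notin S}\max_j|b_j|_v\leq\prod_jH(b_j)$, hence $\prod_{v\in S}|\bfx|_v\geq H(\tilde{\bfx})/\prod_jH(b_j)$. Inserting these into the Subspace quotient and invoking the standard height inequality $H(\tilde{\bfx})\leq(\prod_jH(b_j))H([u_1:\ldots:u_k:1])$ to absorb the remaining $-\epsilon$ powers produces
$$\prod_{v\in S}\prod_{j=1}^k\frac{|L_{vj}(\bfx)|_v}{|\bfx|_v}\leq H(\tilde{\bfx})^{-k-\epsilon}.$$

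The Subspace Theorem then confines the points $\tilde{\bfx}$ to finitely many hyperplanes of $\bP^{k-1}(K)$, so by pigeonhole an infinite sub-collection of $\scrS$ gives points in a single hyperplane $c_1X_1+\cdots+c_kX_k=0$ with $(c_1,\ldots,c_k)\in K^k\setminus\{\mathbf 0\}$, yielding $c_1b_1u_1+\cdots+c_kb_ku_k=0$ for infinitely many tuples. The main obstacle is the central estimate: one must verify that the exponent $-k-1-\epsilon$ furnished by the hypothesis, once combined with the denominators $\prod_v|\bfx|_v^k$ via the product formula, balances out against $\prod_jH(b_j)^{k+1}$ to leave precisely the exponent $-k-\epsilon$ demanded by the Subspace Theorem, with no spurious positive powers of $H(\tilde{\bfx})$ surviving.
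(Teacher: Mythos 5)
Your argument reproduces the paper's proof essentially verbatim: the same choice of linear forms ($L_{vj}=X_j$ everywhere except $L_{w,i(w)}=\sum_j\lambda_jX_j$), the same estimate $\prod_{v\in S}\prod_j|b_ju_j|_v\leq\prod_jH(b_j)$ via the product formula and $S$-unit property, the same lower bound $\prod_{v\in S}|\bfx|_v\geq H(\tilde{\bfx})/\prod_jH(b_j)$, and the same final absorption of the $(\prod_jH(b_j))^{-\epsilon}H([u_1:\ldots:u_k:1])^{-\epsilon}$ factor via the height inequality $H(\tilde{\bfx})\leq(\prod_jH(b_j))H([u_1:\ldots:u_k:1])$ to land on $H(\tilde{\bfx})^{-k-\epsilon}$. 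The arithmetic balances exactly as you describe; there are no spurious positive powers.

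One small omission: you should dispose of the case $k=1$ separately before invoking the Subspace Theorem. For $k=1$ the desired conclusion $c_1b_1u_1=0$ with $c_1\neq 0$ is impossible since $b_1u_1\neq 0$, so the only way the proposition can hold is if the hypothesis of an infinite $\scrS$ is itself unsatisfiable. And indeed it is: the defining inequality reduces to $|\lambda_1|_w\leq H(b_1)^{-2-\epsilon}H([u_1:1])^{-\epsilon}$, which bounds both $H(b_1)$ and $H(u_1)$, so Northcott forces $\scrS$ finite. The paper handles this explicitly; your Subspace argument as written only yields a meaningful hyperplane when $k\geq 2$.
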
 
  	\begin{proof}
  	When $k=1$, the given inequality together with the Northcott property imply that the set $\scrS$ must be finite. Hence we may assume $k\geq 2$. We now apply the Subspace Theorem as
  	in the proof of \cite[Lemma~1]{CZ-ActaMath2004}. Without loss of generality and by replacing $\scrS$ by an infinite subset, we may assume:
  	$$\vert b_1u_1\vert_w = \max\{\vert b_1u_1\vert_w,\ldots,\vert b_ku_k\vert_w\}$$
  	for every $(u_1,\ldots,u_k,b_1,\ldots,b_k)\in \scrS$. 
  	For
  	$v\in S$ and $1\leq j\leq k$,  if $(v,j)\neq (w,1)$ define $L_{vj}(X_1,\ldots,X_k)=X_j$; define $L_{w1}(X_1,\ldots,X_k)=\lambda_1X_1+\ldots+\lambda_kX_k$. Write
  	$\bfx=(b_1u_1,\ldots,b_ku_k)$, we have:
  	\begin{equation}\label{eq:new CZ Lemma1}
  	\begin{split}
  	\displaystyle\prod_{v\in S} \prod_{j=1}^k \frac{\vert L_{vj}(\bfx)\vert_v}{\vert\bfx\vert_v}&=\frac{\displaystyle\prod_{v\in S}\prod_{j=1}^k \vert b_ju_j\vert_v}{\vert b_1u_1\vert_w}\vert \sum_{j=1}^k\lambda_jb_ju_j\vert_w\left(\prod_{v\in S}\vert\bfx\vert_v\right)^{-k}\\
  	&\leq \frac{(\prod_{j=1}^k H(b_j))(\prod_{j=1}^k H(b_j))^{-k-1-\epsilon}H([u_1:\ldots:u_k:1])^{-\epsilon}}{\left(\displaystyle\prod_{v\in S}\vert\bfx\vert_v\right)^k}
  	\end{split}
  	\end{equation}
  	As in the proof of Proposition~\ref{prop:general SML},
  	we have:
  	$\displaystyle\prod_{v\in S}\vert \bfx\vert_v\geq
  	\frac{H(\tilde{\bfx})}{\prod_{j=1}^k H(b_j)}$. Together
  	with \eqref{eq:new CZ Lemma1}, we have:
  	\begin{equation}\label{eq:new CZ Lemma2}
  	\displaystyle\prod_{v\in S} \prod_{j=1}^k \frac{\vert L_{vj}(\bfx)\vert_v}{\vert\bfx\vert_v}\leq H(\tilde{\bfx})^{-k}
  	\left(\prod_{j=1}^k H(b_j)\right)^{-\epsilon} H([u_1:\ldots:u_k:1])^{-\epsilon}.
  	\end{equation}
  	Using 
  	$H(\tilde{\bfx})\leq H([u_1:\ldots:u_k])
  	\displaystyle\prod_{j=1}^k H(b_j)\leq H([u_1:\ldots:u_k:1])\displaystyle\prod_{j=1}^k H(b_j)$, we obtain:
  	$$\displaystyle\prod_{v\in S} \prod_{j=1}^k \frac{\vert L_{vj}(\bfx)\vert_v}{\vert\bfx\vert_v}\leq H(\tilde{\bfx})^{-k-\epsilon}.$$
  	Theorem~\ref{thm:Subspace} yields
  	the desired conclusion.
  	\end{proof}
  	
	\section{The Key Technical Results Toward the Proof of Theorem~\ref{thm:main1}}\label{sec:technical}
	The main technical results
	obtained in this section play a crucial role
	in the proof of Theorem~\ref{thm:main1}.
	We define an equivalence relation $\approx$ on $\Qbar^*$ as 
	follows: $\alpha\approx \beta$
	if there is $\sigma\in G_{\Q}$ such that
	$\displaystyle \frac{\alpha}{\sigma(\beta)}\in \bmmu$.
%	\begin{definition}\label{def:signature}
%	Let $(\alpha_1,\ldots,\alpha_k)$ be a tuple of non-zero
%	algebraic numbers. The signature $s=s(\alpha_1,\ldots,\alpha_k)$ is 
%	defined
%	to be the number of equivalence classes
%	with respect to $\approx$
%	obtained from $\{\alpha_1,\ldots,\alpha_k\}$.
%	\end{definition}
	Let $K$ be a number field and write $d=[K:\Q]$. We have the following:
		\begin{lemma}\label{lem:coefficients}
		Let $\{\gamma_1,\ldots,\gamma_d\}$ be a basis of
		$K$ over $\Q$. 
		There exist constants
		$C_1$ and $C_2$ depending only on 
		the $\gamma_i$'s such that for every $q\in K$
		we can write 
		$q=\displaystyle \sum_{i=1}^d b_i\gamma_i$ where
		$b_i\in \Q$ satisfying $h(b_i)\leq C_1h(q)+C_2$
		for $1\leq i\leq d$.			
		\end{lemma}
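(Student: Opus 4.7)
The plan is to use a standard linear algebra argument combined with the Galois invariance of the height. Let $\sigma_1=\id,\sigma_2,\ldots,\sigma_d:K\hookrightarrow \overline{\Q}$ be the $d$ distinct embeddings of $K$ into $\overline{\Q}$. Given a representation $q=\sum_{i=1}^d b_i\gamma_i$ with $b_i\in\Q$, applying each $\sigma_j$ (which fixes the rationals $b_i$) yields the linear system
\begin{equation*}
\sigma_j(q)=\sum_{i=1}^d b_i\,\sigma_j(\gamma_i),\qquad 1\le j\le d.
\end{equation*}
Let $M=(\sigma_j(\gamma_i))_{1\le j,i\le d}$ be the associated matrix. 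Since $\{\gamma_1,\ldots,\gamma_d\}$ is a $\Q$-basis of $K$, a classical argument (the non-vanishing of the determinant of the embedding matrix for a basis) gives $\det M\neq 0$, so $M$ is invertible over $\overline{\Q}$. Writing $M^{-1}=(c_{ij})$, we obtain $b_i=\sum_{j=1}^d c_{ij}\sigma_j(q)$, where the entries $c_{ij}\in\overline{\Q}$ depend only on the $\gamma_i$'s.

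Next I would invoke the basic height inequalities $h(xy)\le h(x)+h(y)$ and $h(x_1+\cdots+x_d)\le \log d+\sum_{j} h(x_j)$, together with the Galois invariance $h(\sigma_j(q))=h(q)$. Applying these to $b_i=\sum_j c_{ij}\sigma_j(q)$ yields
\begin{equation*}
h(b_i)\le \log d+\sum_{j=1}^d \bigl(h(c_{ij})+h(\sigma_j(q))\bigr)\le d\cdot h(q)+\log d+\sum_{j=1}^d h(c_{ij}).
\end{equation*}
Setting $C_1:=d$ and $C_2:=\log d+\max_{1\le i\le d}\sum_{j=1}^d h(c_{ij})$, both of which depend only on $\{\gamma_1,\ldots,\gamma_d\}$ (through the matrix $M^{-1}$), we obtain the stated bound $h(b_i)\le C_1 h(q)+C_2$ for every $1\le i\le d$.

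There is no serious obstacle here: the only point that deserves a line of justification is the invertibility of $M$, which is equivalent to the linear independence of $\{\gamma_1,\ldots,\gamma_d\}$ over $\Q$ and is standard. The proof is essentially bookkeeping once the inversion step is written down.
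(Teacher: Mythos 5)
Your proof is correct and follows essentially the same route as the paper's one-line sketch: the paper's reference to solving via ``the discriminant of the given basis and the Galois conjugates'' is precisely your inversion of the matrix $M=(\sigma_j(\gamma_i))$ (whose square determinant is the discriminant), and the height bookkeeping you carry out is the intended conclusion. No gaps.
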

		\begin{proof}
		We can solve for the coefficients $b_i$'s using 
		the discriminant of the given basis and the Galois 
		conjugates of $q$ and the $\gamma_i$'s
		to obtain the
		desired height inequality.
		\end{proof}
		
	\subsection{A common setup}\label{subsec:setup}
	The notation and assumptions given in this subsection will
	appear many times. Let $K$,  $k$, 
	$(\alpha_1,\ldots,\alpha_k)$, and 
	$f$ be as in Theorem~\ref{thm:main1}; in particular
	$(\alpha_1,\ldots,\alpha_k)$ is non-degenerate. 
	Assume there is $\theta\in (0,1)$ such that the set $\cM$ 
	defined as in Theorem~\ref{thm:main1} is infinite. Let 
	$\cM_0$ be an infinite subset of $\cM$. By the given inequality on $\displaystyle\max_{1\leq i\leq k} h(q_i)$ and the Northcott property, 
	we observe that there are only finitely many elements of $\cM$ where 
	$n$ is bounded.
	
	The tuple $(\alpha_1,\ldots,\alpha_k)$ is
	said to satisfy Property (P1) if the
	following holds:
	\begin{itemize}
		\item [(P1)] For every $m\in\N$ and $1\leq i\leq k$, $[\Q(\alpha_i):\Q]=[\Q(\alpha_i^m):\Q]$. In other words, 
		if $\beta\neq \alpha_i$ is Galois conjugate
		to $\alpha_i$ then $\displaystyle \frac{\beta}{\alpha_i}\notin \bmmu$.
	\end{itemize}

	The tuple $(\alpha_1,\ldots,\alpha_k)$ is said to satisfy Property
	(P2) if the following holds:
	\begin{itemize}
	\item [(P2)] For $1\leq i, j\leq k$ if
		$\alpha_i \approx \alpha_j$
		then $\alpha_i$ is Galois conjugate to $\alpha_j$.
	\end{itemize}
	Later on, we will use the fact that these properties hold when we replace 
	$(\alpha_1,\ldots,\alpha_k)$ by $(\alpha_1^m,\ldots,\alpha_k^m)$ for some sufficiently large integer $m$. We have:
	\begin{lemma}\label{lem:not in mu}
	If $(\alpha_1,\ldots,\alpha_k)$ satisfies both Properties
	(P1) and (P2) then the following holds. For $1\leq i,j\leq k$, if $\beta$ is Galois conjugate to $\alpha_i$, $\gamma$
	is Galois conjugate to $\alpha_j$, and $\displaystyle\frac{\beta}{\gamma}\in\bmmu$ then $\beta=\gamma$.
	\end{lemma}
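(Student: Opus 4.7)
The plan is to reduce the hypothesis ``$\beta/\gamma\in\bmmu$'' to a statement about $\alpha_j$ alone and then invoke Property (P1) to force equality. Concretely, write $\beta=\sigma(\alpha_i)$ and $\gamma=\tau(\alpha_j)$ for some $\sigma,\tau\in G_{\Q}$. Since $\bmmu$ is stable under the Galois action, applying $\tau^{-1}$ to the quotient $\beta/\gamma$ gives
$$\frac{\rho(\alpha_i)}{\alpha_j}\in\bmmu, \qquad \text{where } \rho:=\tau^{-1}\sigma.$$
In particular $\alpha_i\approx\alpha_j$ (by definition of $\approx$, taking $\sigma=\rho^{-1}$ in Definition of $\approx$), so Property (P2) supplies some $\mu\in G_{\Q}$ with $\mu(\alpha_j)=\alpha_i$.

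Substituting $\alpha_i=\mu(\alpha_j)$ into the displayed containment yields
$$\frac{(\rho\mu)(\alpha_j)}{\alpha_j}\in\bmmu.$$
Now $(\rho\mu)(\alpha_j)$ is a Galois conjugate of $\alpha_j$, so Property (P1) applied to $\alpha_j$ forces $(\rho\mu)(\alpha_j)=\alpha_j$; otherwise the quotient would lie outside $\bmmu$. Unwinding, this gives $\rho(\alpha_i)=\alpha_j$, i.e.\ $\tau^{-1}\sigma(\alpha_i)=\alpha_j$, hence $\sigma(\alpha_i)=\tau(\alpha_j)$, which is exactly $\beta=\gamma$.

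The only subtlety is verifying that the manipulations with $\bmmu$ are legitimate: we use that $\bmmu$ is closed under both multiplication/inversion and under every $G_{\Q}$-action, which is immediate since roots of unity are mapped to roots of unity. Beyond this bookkeeping, the argument is a direct two-line combination of (P2) (to locate $\alpha_i$ in the Galois orbit of $\alpha_j$) followed by (P1) (to eliminate the remaining root-of-unity ambiguity), so I do not anticipate any genuine obstacle.
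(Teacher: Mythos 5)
Your proof is correct and takes essentially the same route as the paper: both establish $\alpha_i\approx\alpha_j$, use (P2) to get a genuine Galois conjugacy between $\alpha_i$ and $\alpha_j$, and then invoke (P1) to eliminate the root-of-unity factor; your version merely spells out the derivation of $\alpha_i\approx\alpha_j$ a bit more explicitly and pivots on $\alpha_j$ rather than $\alpha_i$, which is an immaterial change.
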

	\begin{proof}
	We must have $\alpha_i\approx\alpha_j$, hence Property
	(P2) implies that $\alpha_i$ is Galois conjugate to $\alpha_j$. Hence we can write $\beta=\sigma(\alpha_i)$ and $\gamma=\tau(\alpha_i)$ for some $\sigma,\tau\in G_\Q$. Since
	$\displaystyle\frac{\beta}{\gamma}\in\bmmu$, we have
	$\displaystyle\frac{\tau^{-1}\sigma(\alpha_i)}{\alpha_i}\in\bmmu$.
	Property (P1) gives $\tau^{-1}\sigma(\alpha_i)=\alpha_i$.
	Hence $\beta=\sigma(\alpha_i)=\tau(\alpha_i)=\gamma$.
	\end{proof}

	For the rest of this subsection, assume that $(\alpha_1,\ldots,\alpha_k)$ satisfies both Properties (P1) and (P2). By extending $K$, we may choose a finite subset $S$ of $M_K$ 
	containing $M_K^{\infty}$ such that 
	$\alpha_i$ is an $S$-unit for $1\leq i\leq k$.
	Now we partition $\{\alpha_1,\ldots,\alpha_k\}$
	into $\fs$ subsets
	whose cardinalities are denoted by $m_1,\ldots,m_{\fs}$
	according to the $\fs$ equivalence classes with respect
	to $\approx$. \emph{From now on, we relabel our notation 
	as $\alpha_{i,1},\ldots,\alpha_{i,m_i}$
	where $\{\alpha_{i,1},\ldots,\alpha_{i,m_i}\}$ 
	for $1\leq i\leq \fs$ are the sets in the resulting 
	partition. The $q_1,\ldots,q_k$ are also relabelled as $q_{i,1},\ldots,q_{i,m_i}$ for $1\leq i\leq \fs$ accordingly.} So the original sum $\displaystyle\sum_{i=1}^k q_i\alpha_i^n$
	is now denoted as $\displaystyle\sum_{i=1}^{\fs}\sum_{j=1}^{m_i} q_{i,j}\alpha_{i,j}^n$. We will also denote the
	tuple $(n,q_1,\ldots,q_k)$ by $(n,q_{i,j})_{i,j}$.
	Write $d=[K:\Q]$ and
	let $\{\gamma_1,\ldots,\gamma_d\}$ be a 
	basis of $K$ over $\Q$.

	By Property (P2), for 
	$1\leq i\leq \fs$, the elements
	$\alpha_{i,1},\ldots,\alpha_{i,m_i}$ are
	Galois conjugate to each other and we let 
	$d_i\geq m_i$ denote the number of
	all possible Galois conjugates. We now denote 
	all the other Galois conjugates that do not appear in
	$\{\alpha_{i,1},\ldots,\alpha_{i,m_i}\}$
	as $\alpha_{i,m_i+1},\ldots,\alpha_{i,d_i}$.
	Let $L/\Q$ be the Galois closure of $K/\Q$. For every
	$\sigma\in \Gal(L/\Q)$, for $1\leq i\leq \fs$,
	let $\sigma_{i}$ be the permutation on $\{1,\ldots,d_i\}$
	corresponding to the action of $\sigma$ on $\{\alpha_{i,1},\ldots,\alpha_{i,d_i}\}$. In other words, $\sigma(\alpha_{i,j})=\alpha_{i,\sigma_i(j)}$ for $1\leq j\leq d_i$.

	\subsection{The Key Technical Results}
	Let $K$, $k$, $(\alpha_1,\ldots,\alpha_k)$, $f$, $\theta$, $\cM$, $\cM_0$, $S$, $L$, $d$, 
	and the basis 
	$\{\gamma_1,\ldots,\gamma_d\}$ be as in 
	Subsection~\ref{subsec:setup}; in particular $(\alpha_1,\ldots,\alpha_k)$ satisfies both (P1) and (P2) and each 
	$\alpha_i$ is an $S$-unit. 
	Relabel the
	$\alpha_i$'s and $q_i$'s
	as $\alpha_{i,j}$ and $q_{i,j}$
	for $1\leq i\leq \fs$ and $1\leq j\leq m_i$, and let $d_1,\ldots,d_{\fs}$ and $\alpha_{i,j}$ for $m_i<j\leq d_{i}$ as before. 
	
	\begin{remark}
	Let $x$ be a complex number. When the fractional part of the real part of $x$ is
	$\displaystyle\frac{1}{2}$, there are two integers
	having the closest distance to $x$.
	To be precise, we define
	the closest integer to $x$ to be the \emph{smallest}
	integer $p$ such that $\vert x-p\vert=\Vert x\Vert_{\Z}$. 
	\end{remark}

	\begin{proposition}\label{prop:technical}	
	There exists an infinite subset $\cM_1$ of $\cM_0$
	such that the following holds. For every $(n,q_{i,j})_{i,j}=(n,q_1,\ldots,q_k)\in \cM_1$, 
		let $p$ denote the closest integer
		to $\displaystyle\sum_{i=1}^k q_i\alpha_i^n=\displaystyle\sum_{i=1}^{\fs}\sum_{j=1}^{m_i}q_{i,j}\alpha_{i,j}^n$. We can write
		$p=\displaystyle\sum_{i=1}^{\fs}\sum_{j=1}^{d_i}\eta_{i,j}\alpha_{i,j}^n$ with the following properties:
		\begin{itemize}
			\item [(i)] $\eta_{i,j}\in L$ and $h(\eta_{i,j})=o(n)$ for $1\leq i\leq \fs$
			and $1\leq j\leq d_i$.
			\item [(ii)] For every $\sigma\in \Gal(L/\Q)$ 
			and $1\leq i\leq \fs$, let
			$\sigma_i$ denote the induced permutation
			on $\{1,\ldots,d_i\}$ as in 
			Subsection~\ref{subsec:setup}. We have $\sigma(\eta_{i,j})=\eta_{i,\sigma_i(j)}$ for $1\leq i\leq\fs$
			and $1\leq j\leq d_i$.
			\item [(iii)] $q_{i,j}=\eta_{i,j}$ for $1\leq i\leq \fs$ and $1\leq j\leq m_i$.
			\item [(iv)] Let $B$ be the set of $\beta$ such that $\beta\notin\{q_{i,j}\alpha_{i,j}^n:\ 1\leq i\leq \fs,\ 1\leq j\leq m_i\}$ and $\beta$ is Galois conjugate
			to $q_{i,j}\alpha_{i,j}^n$ for some $1\leq i\leq \fs$ and 
			$1\leq j\leq m_i$. Then the elements
			$\eta_{i,j}\alpha_{i,j}^n$ for $1\leq i\leq \fs$
			and $m_i<j\leq d_i$ are distinct and are exactly
			all the elements of $B$. 
		\end{itemize}
	\end{proposition}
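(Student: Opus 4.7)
The plan is to define $\eta_{i,j} := q_{i,j}$ for $j \le m_i$, and for $m_i < j \le d_i$ to set $\eta_{i,j} := \tau(q_{i,1})$ for any $\tau \in \Gal(L/\Q)$ with $\tau(\alpha_{i,1}) = \alpha_{i,j}$. This construction is well-defined, and the resulting tuple satisfies (i)--(iv), only after one restricts to an infinite subset $\cM_1 \subseteq \cM_0$ on which a strong Galois-equivariance holds for the $q$'s: whenever $\tau \in \Gal(L/\Q)$ sends $\alpha_{i,j}$ to $\alpha_{i,j'}$ with $j, j' \le m_i$, we also have $\tau(q_{i,j}) = q_{i,j'}$ (the special case $j=j'=1$ captures the consistency of the definition). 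Producing $\cM_1$ is the main task and it is where the Subspace Theorem enters.

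To produce $\cM_1$, I would argue by contradiction. Suppose for some fixed triple $(i, \tau, j_0, j_0')$ the equality $\tau(q_{i,j_0}) = q_{i,j_0'}$ fails for infinitely many tuples in $\cM_0$ (even though $\tau(\alpha_{i,j_0}) = \alpha_{i,j_0'}$). Writing $S := \sum_{i, j \le m_i} q_{i,j}\alpha_{i,j}^n$, we have $p - S = -\epsilon$ with $|\epsilon| < \theta^n$. Applying $\tau$ gives $p - \sum \tau(q_{i,j})\alpha_{i,\tau_i(j)}^n = -\tau(\epsilon)$, and subtracting yields the identity $\sum_{i, j \le m_i}\bigl(q_{i,j}\alpha_{i,j}^n - \tau(q_{i,j})\alpha_{i,\tau_i(j)}^n\bigr) = \tau(\epsilon) - \epsilon$. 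Regrouping the left side by distinct monomials $\alpha_{i,j'}^n$ (pairwise distinct across $(i,j')$ thanks to Lemma~\ref{lem:not in mu}), we obtain an $L$-linear combination of these monomials with $o(n)$-height coefficients that are not all zero, by the failure-of-equivariance assumption. The right side, at the archimedean place used to define $\|\cdot\|_{\Z}$, has absolute value at most $2\theta^n$. Applying Proposition~\ref{prop:CZLemma1} to this linear form produces a non-trivial vanishing $L$-linear relation among the $\alpha_{i,j'}^n$ with coefficients of height $o(n)$, contradicting Proposition~\ref{prop:general SML} applied to the non-degenerate tuple of all distinct $\alpha_{i,j'}$'s. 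Hence the equivariance holds for all but finitely many tuples, giving $\cM_1$.

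With $\cM_1$ in hand, the $\eta_{i,j}$ are well-defined. Property~(i) follows from Galois-invariance of the height together with $h(q_{i,j}) < f(n) = o(n)$; property~(ii) is automatic from the construction; property~(iii) is by definition. For property~(iv): by the equivariance, $\Stab(q_{i,1}\alpha_{i,1}^n) = \Stab(\alpha_{i,1})$, so the $\Gal(L/\Q)$-orbit of $q_{i,1}\alpha_{i,1}^n$ has exactly $d_i$ elements, labelled bijectively by $j = 1,\ldots,d_i$ as $\eta_{i,j}\alpha_{i,j}^n$; the orbits across distinct $i$'s are disjoint by Lemma~\ref{lem:not in mu}, and the first $m_i$ elements are the original $q_{i,j}\alpha_{i,j}^n$, so the remaining $d_i - m_i$ are exactly $B$. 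It remains to prove the identity $p = \sum \eta_{i,j}\alpha_{i,j}^n$. Denoting the right side by $\tilde S$, property~(ii) gives $\tilde S \in \Q$, so $\tilde S - p \in \Q$ with $|\tilde S - p| \le |S - p| + |\tilde S - S| \le \theta^n + e^{o(n)}\cdot\max_{j>m_i}|\alpha_{i,j}|^n$. One further application of Proposition~\ref{prop:CZLemma1}, applied to the linear form whose vanishing would assert $\tilde S - p = 0$, excludes any nonzero rational $\tilde S - p$ of such small archimedean size on an infinite subset, forcing $\tilde S = p$ after a final shrinkage of $\cM_1$.

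The main obstacle is the equivariance step: the linear forms input to the Subspace Theorem must intertwine the Galois action on $K$ with the power structure on the $\alpha_{i,j}$'s, and this construction becomes delicate when $k \geq 2$ and the $q_{i,j}$'s live in a general number field rather than being integer multiples of a fixed algebraic number---precisely the generalization that the introduction flags as requiring technical modifications beyond the original Corvaja--Zannier construction in \cite{CZ-ActaMath2004}. A secondary difficulty is upgrading $\tilde S \approx p$ to $\tilde S = p$ exactly rather than approximately; here Lemma~\ref{lem:coefficients} and the $S$-unit structure of the $\alpha_{i,j}$'s are useful to control denominators, but one must take care that the archimedean smallness genuinely dominates the denominator bound.
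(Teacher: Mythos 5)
Your approach inverts the paper's logic in a way that is not repairable. The paper first uses the Subspace Theorem (applied to a vector whose coordinates include $p$ itself, indexed cleverly by quadruples $(i,j_1,j_2,\ell)$ encoding coefficient-conjugate products $b_{i,j_1,\ell}\alpha_{i,j_2}^n$, with a minimality-of-dimension argument to force the $X_0=p$ coefficient to be nonzero) to deduce the structural identity $p=\sum\eta_{i,j}\alpha_{i,j}^n$, and only then reads off Galois equivariance (ii), the identification (iii), and the orbit description (iv) by further applications of Propositions~\ref{prop:general SML} and \ref{prop:CZLemma1}. You try to build the $\eta$'s first from the $q$'s via Galois conjugation and to prove the identity $p=\sum\eta_{i,j}\alpha_{i,j}^n$ afterwards, but both of the steps this requires contain genuine gaps.

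The fatal gap in the equivariance step is the line ``the right side, at the archimedean place used to define $\Vert\cdot\Vert_{\Z}$, has absolute value at most $2\theta^n$.'' This claims $\vert\tau(\epsilon)\vert<\theta^n$ at the \emph{fixed} embedding, but a Galois automorphism $\tau\in\Gal(L/\Q)$ does not preserve the archimedean absolute value: $\vert\tau(\epsilon)\vert$ at the fixed place equals $\vert\epsilon\vert$ at a different archimedean place, about which the hypothesis $\Vert\sum q_i\alpha_i^n\Vert_{\Z}<\theta^n$ says nothing. Indeed $\tau(\epsilon)=\tau(S)-p$, and since $\tau(S)$ contains terms $\tau(q_{i,j})\alpha_{i,\tau_i(j)}^n$ where $\tau_i(j)$ may exceed $m_i$, one would need $\vert\alpha_{i,\tau_i(j)}\vert<1$ to conclude $\tau(\epsilon)$ is small --- but that is precisely property~(iv) of Theorem~\ref{thm:main1}, which you have not yet established. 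Without the $2\theta^n$ bound, Proposition~\ref{prop:CZLemma1} has no foothold and the contradiction never materializes. The paper sidesteps this entirely: equation \eqref{eq:long 1} rewrites the single small archimedean bound as a product over all archimedean places of $L$ via the trivial identity $\vert z\vert=\prod_v\vert z\vert^{d(v)/[L:\Q]}$, so the linear forms $L_{w,0}$ are assembled from the \emph{conjugated coefficients} $\sigma_w(\gamma_\ell)$ acting on the \emph{same} vector, rather than attempting to transport a bound along a Galois automorphism.

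The same circularity infects your final step. Your inequality $\vert\tilde S-p\vert\le\theta^n+e^{o(n)}\cdot\max_{j>m_i}\vert\alpha_{i,j}\vert^n$ is only useful if the max is less than $1$; but $\vert\alpha_{i,j}\vert<1$ for $j>m_i$ is Theorem~\ref{thm:main1}(iv), not a hypothesis. You cannot apply Proposition~\ref{prop:CZLemma1} to exclude a nonzero $\tilde S-p$ when the claimed archimedean smallness is unavailable. The paper's ordering of deductions is essential: the identity $p=\sum\eta_{i,j}\alpha_{i,j}^n$ must come \emph{out of} the Subspace Theorem (with $p$ encoded as a coordinate of the input vector), and the inequality $\vert\alpha_{i,j}\vert<1$ for $j>m_i$ is then deduced afterward in Section~\ref{sec:proof main1}, not assumed here. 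Your closing paragraph correctly flags equivariance as the main obstacle, but the fix you propose does not get around it.
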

	\begin{proof}
	For every $v\in M_L^{\infty}$, fix $\sigma_v\in \Gal(L/\Q)$
	 such that $v$ corresponds to the (real or complex)
	 embedding $\sigma_v^{-1}$. In other words, for every 
	 $\alpha\in L$, we have:
	 $$\vert \alpha\vert_v=\vert \sigma_v^{-1}(\alpha)\vert^{d(v)/[L:\Q]}.$$
      
	 For $1\leq i\leq \fs$ and for $v\in M_L^{\infty}$,
	 let $\sigma_{v,i}$ be the induced permutation on $\{1,\ldots,d_i\}$ as in Subsection~\ref{subsec:setup} which means 
	 $\sigma_v(\alpha_{i,j})=\alpha_{i,\sigma_{v,i}(j)}$.

	For $(n,q_{i,j})_{i,j}\in \cM_0$, for $1\leq i\leq \fs$
	and $1\leq j\leq m_i$, write:
	$$q_{i,j}=\displaystyle\sum_{\ell=1}^{d}b_{i,j,\ell}\gamma_{\ell}$$
	where $b_{i,j,\ell}\in \Q$. Let $p$ be the closest integer to
	$$\displaystyle\sum_{i=1}^\fs\sum_{j=1}^{m_i} q_{i,j}\alpha_{i,j}^n=\sum_{i=1}^\fs\sum_{j=1}^{m_i}\sum_{\ell=1}^{d}\gamma_{\ell}b_{i,j,\ell}\alpha_{i,j}^n.$$
	Since $\displaystyle\sum_{v\in M_L^\infty} d(v)=[L:\Q]$ 
	and by the definition of $\cM$, we have:
	\begin{equation}
	\begin{split}\label{eq:long 1}
	\theta^n&>\left\vert\sum_{i=1}^\fs\sum_{j=1}^{m_i}\sum_{\ell=1}^{d}\gamma_{\ell}b_{i,j,\ell}\alpha_{i,j}^n-p\right\vert=\displaystyle
	\prod_{v\in M_L^\infty}\left\vert\sum_{i=1}^\fs\sum_{j=1}^{m_i}\sum_{\ell=1}^{d}\gamma_{\ell}b_{i,j,\ell}\alpha_{i,j}^n-p\right\vert^{\frac{d(v)}{[L:\Q]}}\\
	&=\displaystyle\prod_{v\in M_L^{\infty}}\left\vert\sum_{i=1}^\fs\sum_{j=1}^{m_i}\sum_{\ell=1}^{d}
	\sigma_v(\gamma_{\ell})b_{i,j,\ell}\alpha_{i,\sigma_{v,i}(j)}^n-p\right\vert_v. 
	\end{split}
	\end{equation}

	Let $\scrI$ be the set of quadruples $(i,j_1,j_2,\ell)$
	satisfying the following: $1\leq i\leq \fs$, 
	$1\leq j_1\leq m_i$, $1\leq j_2\leq d_i$, 
	and $1\leq \ell\leq d$. For each $(n,q_{i,j})_{i,j}\in \cM_0$, we associate a vector $\bfy:=\bfy(n,q_{i,j})_{i,j}$ whose components are 
	indexed by $\scrI$ and defined to be
	$y_{(i,j_1,j_2,\ell)}=b_{i,j_1,\ell}\alpha_{i,j_2}^n$ 
	for $(i,j_1,j_2,\ell)\in \scrI$. For $v\in M_L^\infty$
	and $\bfa=(i,j_1,j_2,\ell)\in\scrI$, define 
	$$\delta_{v,\bfa}=\twopartdef{\sigma_v(\gamma_\ell)}{\sigma_{v,i}(j_1)=j_2}{0}{\sigma_{v,i}(j_1)\neq j_2}.$$
	Inequality \eqref{eq:long 1} can be
	rewritten as:
	\begin{equation}\label{eq:long 2 v5}
	\prod_{v\in M_L^\infty}\left\vert \sum_{\bfa\in \scrI} \delta_{v,\bfa}y_{\bfa}-p\right\vert_v <\theta^n. 
	\end{equation}

	Among all infinite subsets
	of $\cM_0$, let $\cM_0'$ be such that the vector space
	over $L$ generated by the set
	$\{\bfy(n,q_{i,j})_{i,j}:\ (n,q_{i,j})_{i,j}\in\cM_0'\}$ 
	has minimal dimension. Let $V$ denote this vector space
	and let $\fr=\dim_L(V)$. By applying Gaussian elimination 
	to a system of linear equations 
	defining $V$, we obtain a 
	subset $\scrI^*$ of $\scrI$ consisting of 
	$\fr$ elements together with scalars
	$c_{\bfa,\bfb}\in L$ for $\bfa\in \scrI\setminus \scrI^*$
	and $\bfb\in \scrI^*$
	such that $V$ is given by the linear system:
	$$Y_{\bfa}=\sum_{\bfb\in \scrI^*} c_{\bfa,\bfb}Y_{\bfb}\ \ \text{for $\bfa\in \scrI\setminus\scrI^*$.}$$
	Therefore $y_{\bfa}=\displaystyle \sum_{\bfb\in \scrI^*}c_{\bfa,\bfb}y_{\bfb}$ for $\bfa\in \scrI\setminus \scrI^*$. Consequently, for every $v\in M_L^\infty$, we can write
	$\displaystyle \sum_{\bfa\in \scrI} \delta_{v,\bfa}y_{\bfa}=\sum_{\bfb\in \scrI^*} \tilde{c}_{v,\bfb}y_{\bfb}$
	where the $\tilde{c}_{v,\bfb}$'s for $\bfb\in \scrI^*$
	depend only on $\scrI$, $\scrI^*$, $v$, the $\delta_{v,\bfa}$'s, and the $c_{\bfa,\bfb}$'s. Inequality \eqref{eq:long 2 v5} is rewritten
	as:
	\begin{equation}\label{eq:long 3 v5}
	\prod_{v\in M_L^\infty}\left\vert \sum_{\bfb\in \scrI^*} \tilde{c}_{v,\bfb}y_{\bfb}-p\right\vert_v <\theta^n. 
	\end{equation}

	We are ready to apply the Subspace Theorem as follows. 
	The vector of variables $\bfX$ consists of $X_0$ and $X_{\bfb}$ for $\bfb\in \scrI^*$.
	Let $S'$ be the set of places of $L$ lying above
	$S$. For $w\in S'$ the collection $\cL_{w}$
	of $\vert \scrI^*\vert+1=\fr+1$ linear forms, whose members are denoted  $L_{w,0}(\bfX)$ and $L_{w,\bfb}(\bfX)$
	for $\bfb\in \scrI^*$, is defined as follows:
	\begin{itemize}
		\item If $w\in M_L^{0}$, define $L_{w,0}(\bfX)=X_0$
		and $L_{w,\bfb}(\bfX)=X_{\bfb}$.
	
		\item If $w\in M_L^{\infty}$, define 
		$L_{w,\bfb}(\bfX)=X_{\bfb}$ and 
		define:
		$$L_{w,0}(\bfX)= \displaystyle\sum_{\bfb\in\scrI^*}\tilde{c}_{w,\bfb}X_{\bfb}- X_0.$$
	\end{itemize}
	Clearly, the linear forms in $\cL_w$ are linearly
	independent for every $w\in S'$.
	
	 For $(n,q_{i,j})_{i,j}\in \cM_0'$, we define the vector
	 $\bfx$ whose coordinates are denoted as $x_0$ and 
	 $x_{\bfb}$ for $\bfb\in \scrI^*$ as follows:
	 $$x_0=p\ \text{and}\ x_{\bfb}=y_{\bfb}.$$
	 For $1\leq i\leq \fs$ and $1\leq j\leq m_i$, since
	 $H(q_{i,j})<e^{f(n)}=e^{o(n)}$, we have
	 $\vert q_{i,j}\vert =e^{o(n)}$ and $H(b_{i,j,\ell})=e^{o(n)}$ 
	 for $1\leq \ell\leq d$ by 
	 Lemma~\ref{lem:coefficients}. 
	 By \eqref{eq:long 1}, we have:
	 $$\vert p\vert \leq \left\vert\sum_{i=1}^{\fs}\sum_{j=1}^{m_i} q_{i,j}\alpha_{i,j}^n\right\vert+1\leq C_3^n$$	
	 for some constant $C_3$ depending 		 
     only on $K$, the $\alpha_{i,j}$'s,
     and the sublinear function $f$. Hence there is
     a constant $C_4$ depending only on 
     $K$, $S$, $f$, the $\alpha_{i,j}$'s,
     and the $\gamma_{\ell}$'s such that:
     \begin{equation}\label{eq:C_4}
     H(\tilde{\bfx})<C_4^n. 		 
	\end{equation}	 		 
	 		 
     Let $\scrB=\max\{H(b_{i,j,\ell}):\ 1\leq i\leq \fs, 1\leq j\leq d_i, 1\leq \ell\leq d\}=e^{o(n)}$ and $N=d(d_1+\ldots+d_\fs)$ which
     is the number of triples $(i,j,\ell)$. By inequality
     \eqref{eq:long 3 v5}, the product formula (applied to the $S$-units $\alpha_{i,j}$'s), and
     the integrality of $p$, we have:
     \begin{equation}\label{eq:new Main Subspace}
     \prod_{w\in S'}\prod_{L\in \cL_w} \frac{\vert L(\bfx)\vert_w}{\vert\bfx\vert_w}
     \leq\frac{\theta^n \scrB^{\vert \scrI^*\vert}}{\left(\displaystyle\prod_{w\in S'}\vert\bfx\vert_w\right)^{\vert\scrI^*\vert+1}}=\theta^n\scrB^{\fr}\left(\displaystyle\prod_{w\in S'}\vert\bfx\vert_w\right)^{-\fr-1}.
     \end{equation}
	
	As in the proof of Proposition~\ref{prop:general SML},
	we have:
	$$\displaystyle\prod_{w\in S'} \vert\bfx\vert_w=\frac{H(\tilde{\bfx})}{\displaystyle\prod_{w\notin S'} \vert\bfx\vert_w}\geq \frac{H(\tilde{\bfx})}{\scrB^N}.$$
	Together with \eqref{eq:new Main Subspace}, we have:
	\begin{equation}\label{eq:new Main Subspace1}
     \prod_{w\in S'}\prod_{L\in \cL_w} \frac{\vert L(\bfx)\vert_w}{\vert\bfx\vert_w}
     \leq \theta^n\scrB^{\fr+N(\fr+1)}H(\tilde{\bfx})^{-\fr-1}.
     \end{equation}
	
	From $\theta\in (0,1)$, $\scrB=e^{o(n)}$,
	together with \eqref{eq:C_4} and 
	\eqref{eq:new Main Subspace1},
	there is $\epsilon>0$ such that:
	$$\prod_{w\in S'}\prod_{L\in \cL_w} \frac{\vert L(\bfx)\vert_w}{\vert\bfx\vert_w}<H(\tilde{\bfx})^{-\fr-1-\epsilon}$$
	when $n$ is sufficiently large. 
	By Theorem~\ref{thm:Subspace}, there exists
	a non-trivial linear relation:
	$$a_0X_0+\sum_{\bfb\in\scrI^*} a_{\bfb}X_{\bfb}=0$$
	satisfied by infinitely many $(n,q_{i,j})_{i,j}\in \cM_0'$ where
	$a_0,a_{\bfb}\in L$. In other
	words, there is an infinite subset $\cM_2$ of $\cM_0'$
	such that:
	\begin{equation}\label{eq:relation on M2}
	a_0p+\sum_{\bfb\in\scrI^*}a_{\bfb}y_{\bfb}=0
	\end{equation}
	holds for every $(n,q_{i,j})_{i,j}\in \cM_2$. 
	We claim that $a_0\neq 0$.
	
	Indeed, if we have $a_0=0$, the nontrivial linear relation 
	$\displaystyle\sum_{\bfb\in\scrI^*} a_{\bfb}y_{\bfb}=0$
	implies that the vector space over $L$ generated by
	$\{\bfy(n,q_{i,j})_{i,j}:\ (n,q_{i,j})_{i,j}\in \cM_2\}$
	has dimension at most $\fr-1$. This violates the minimality
	of $\fr$.
	
	Therefore, we can write $p=\displaystyle\sum_{\bfb\in\scrI^*}\frac{-a_\bfb}{a_0}y_{\bfb}$. By the definition of
	$y_{\bfb}$ and the fact that $h(b_{i,j,\ell})=o(n)$
	for every $(i,j,\ell)$,
	we can write: 
	\begin{equation}\label{eq:p=}
	p=\sum_{i=1}^{\fs}\sum_{j=1}^{d_i}\eta_{i,j}\alpha_{i,j}^n
	\end{equation}
	with $\eta_{i,j}\in L$ satisfying $h(\eta_{i,j})=o(n)$
	for every $(i,j)$. Thus we can achieve property (i) in the proposition.
	
	For $1\leq i\leq \fs$ and $m_i<j\leq d_i$, we define $q_{i,j}=0$ formally. Then for every 
	$(n,q_{i,j})_{i,j}\in \cM_2$, we have:
	\begin{equation}\label{eq:qij-etaij}
	\left\vert \sum_{i=1}^{\fs}\sum_{j=1}^{d_i}(q_{i,j}-\eta_{i,j})\alpha_{i,j}^n\right\vert=\left\vert \sum_{i=1}^{\fs}\sum_{j=1}^{m_i}q_{i,j}\alpha_{i,j}^n-p\right\vert<\theta^n
	\end{equation}
	and
	\begin{equation}\label{eq:p with sigma}
		p=\sum_{i=1}^{\fs}\sum_{j=1}^{d_i}\sigma(\eta_{i,j})\alpha_{i,\sigma_i(j)}^n=\sum_{i=1}^{\fs}\sum_{j=1}^{d_i}\sigma(\eta_{i,\sigma_i^{-1}(j)})\alpha_{i,j}^n\ \ \text{for $\sigma\in\Gal(L/\Q)$.}
	\end{equation}
	Here we note that $\sigma_{i}^{-1}$ is the inverse
	of the permutation
	$\sigma_{i}$ on $\{1,\ldots,d_i\}$ which is also
	the permutation induced from $\sigma^{-1}$.
	From \eqref{eq:p=} and \eqref{eq:p with sigma}, we have:
	\begin{equation}\label{eq:difference=0}
	\sum_{i=1}^{\fs}\sum_{j=1}^{d_i}(\eta_{i,j}-\sigma(\eta_{i,\sigma_i^{-1}(j)}))\alpha_{i,j}^n=0\ \ \text{for $\sigma\in\Gal(L/\Q)$.}
	\end{equation}
	
	For $\sigma\in\Gal(L/\Q)$, $1\leq i\leq \fs$, $1\leq j\leq d_i$, let $\cM_2(\sigma,i,j)$ denote the set of $(n,q_{i,j})_{i,j}\in \cM_2$ such that $\eta_{i,j}-\sigma(\eta_{i,\sigma_i^{-1}(j)})\neq 0$. By Lemma~\ref{lem:not in mu} and Proposition~\ref{prop:general SML}, the set $\cM_2(\sigma,i,j)$ is finite. Hence there is a subset $\cM_2'$ of $\cM_2$ such that $\cM_2\setminus\cM_2'$ is finite and $\eta_{i,j}=\sigma(\eta_{i,\sigma_i^{-1}(j)})$ for every $i$, $j$, and $\sigma$ for every
	$(n,q_{i,j})_{i,j}\in\cM_2'$. This gives property (ii).
	
	For $1\leq i\leq \fs$, $1\leq j\leq m_i$, let
	$\cM_2'(i,j)$ be the set of
	$(n,q_{i,j})_{i,j}\in \cM_2'$ such that 
	$q_{i,j}-\eta_{i,j}\neq 0$. 
	We claim that $\cM_2'(i,j)$
	is finite. Assume otherwise that some $\cM_2'(i^*,j^*)$ is 
	infinite. Then there is a \emph{fixed} set $\scrP$
	such that: 
	$$\scrP=\{(i,j):\ 1\leq i\leq \fs, 
	1\leq j\leq d_i, q_{i,j}-\eta_{i,j}\neq 0\}$$
	for \emph{infinitely many} $(n,q_{i,j})_{i,j}\in \cM_2'(i^*,j^*)$. In
	particular $(i^*,j^*)\in \scrP$. Inequality 
	\eqref{eq:qij-etaij} gives:
	\begin{equation}\label{eq:over scrP}
	\left\vert \sum_{(i,j)\in\scrP}(q_{i,j}-\eta_{i,j})\alpha_{i,j}^n\right\vert<\theta^n
	\end{equation}	
	for infinitely many $(n,q_{i,j})_{i,j}\in \cM_2'(i^*,j^*)$. 
	We have
	$\vert(q_{i^*,j^*}-\eta_{i^*,j^*})\alpha_{i,j}^n\vert\geq \vert q_{i^*,j^*}-\eta_{i^*,j^*}\vert$
	since $\vert\alpha_{i^*,j^*}\vert\geq 1$. For
	 $(i,j)\in \scrP$ and
	 $\delta\in (0,1)$, from 
	 $H(1/(q_{i,j}-\eta_{i,j}))=
	 H(q_{i,j}-\eta_{i,j})=e^{o(n)}$ , we have that
	 $\vert q_{i,j}-\eta_{i,j}\vert>\delta^n$
	 when $n$ is sufficiently large.
	 From \eqref{eq:over scrP}, we can apply 
	Proposition~\ref{prop:CZLemma1} to have a non-trivial
	linear relation among the $(q_{i,j}-\eta_{i,j})\alpha_{i,j}^n$ for $(i,j)\in\scrP$ for infinitely many $(n,q_{i,j})_{i,j}\in \cM_2'(i^*,j^*)$. Proposition~\ref{prop:general SML} and
	Lemma~\ref{lem:not in mu} now yield a contradiction. Hence
	$\cM_2'(i,j)$ is finite for every $1\leq i\leq \fs$ and 
	$1\leq j\leq m_i$. Therefore there is a subset $\cM_2''$ of $\cM_2'$
	such that $\cM_2'\setminus\cM_2''$ is finite and $q_{i,j}=\eta_{i,j}$ for $1\leq i\leq \fs$ and $1\leq j\leq m_i$
	for every $(n,q_{i,j})_{i,j}\in\cM_2''$. This gives
	property (iii).
	
	From the choice of $\cM_2'$ and $\cM_2''$, we have that
	$\eta_{i,j}\neq 0$ for every $1\leq i\leq \fs$
	and $1\leq j\leq d_i$. Now if $\eta_{i_1,j_1}\alpha_{i_1,j_1}^n=\eta_{i_2,j_2}\alpha_{i_2,j_2}^n$
	for some $(i_1,j_1)\neq (i_2,j_2)$
	then $H(\alpha_{i_1,j_1}/\alpha_{i_2,j_2})^n=H(\eta_{i_1,j_1}/\eta_{i_2,j_2})=e^{o(n)}$
	which gives a contradiction as $n$ is sufficiently large
	thanks to Lemma~\ref{lem:not in mu}. Hence there is 
	a subset $\cM_1$ of $\cM_2''$ such that 
	$\cM_2''\setminus\cM_1$ is finite and 
	the $\eta_{i,j}\alpha_{i,j}^n$'s are distinct for every
	$(n,q_{i,j})_{i,j}\in\cM_1$.
	
	Given $(n,q_{i,j})\in \cM_1$, let $B$ be the set defined in Property (iv) of the proposition. For $1\leq i\leq \fs$ and 
	$m_i<j\leq d_i$, there is $\sigma\in \Gal(L/\Q)$ such that 
	$\sigma(\alpha_{i,1})=\alpha_{i,j}$; consequently
	$\sigma_i(1)=j$. We have:
	$$\sigma(q_{i,1}\alpha_{i,1}^n)=\sigma(\eta_{i,1})\alpha_{i,j}^n=\eta_{i,j}\alpha_{i,j}^n.$$
	Hence $\eta_{i,j}\alpha_{i,j}^n\in B$.
	
	Conversely if $\beta\in B$, write $\beta=\sigma(q_{i,j}\alpha_{i,j}^n)$ for some $1\leq i\leq \fs$, $1\leq j\leq m_i$
	and $\sigma\in\Gal(L/\Q)$. Then $\beta=\eta_{i,\sigma_i(j)}\alpha_{i,\sigma_i(j)}^n$. By the definition of $B$, we must have $m_i<\sigma_i(j)\leq d_i$.
	
	Therefore the set $\cM_1$ satisfies all the required properties (i)-(iv). This finishes the proof of the proposition.
	\end{proof}
	
	\section{Proof of Theorem~\ref{thm:main1}}\label{sec:proof main1}
	Let $K$, $(\alpha_1,\ldots,\alpha_k)$, $f$, $\theta$, and $\cM$ be as in Theorem~\ref{thm:main1}.
	By extending $K$, we may assume that 
	there is a finite subset $S$ of $M_K$ containing
	$M_K^{\infty}$ such that
	$\alpha_i$ is an $S$-unit 
	for $1\leq i\leq k$. 
	For every $m\in \N$, for $0\leq r\leq m-1$,
	replacing $(\alpha_1,\ldots,\alpha_k)$
	by $(\alpha_1^m,\ldots,\alpha_k^m)$ and replacing
	$\{(n,q_1,\ldots,q_k)\in\cM:\ n\equiv r \bmod m\}$
	by $\left\{\left(\displaystyle\frac{n-r}{m},q_1\alpha_1^r,\ldots,q_k\alpha_k^r\right):\ (n,q_1,\ldots,q_r)\in \cM:\ n\equiv r \bmod m\right\}$ if necessary,
	we may assume that $(\alpha_1,\ldots,\alpha_k)$ satisfies
	Properties (P1) and (P2) defined in 
	Subsection~\ref{subsec:setup}. Let the notation
	$\fs$, $m_1,\ldots,m_\fs$, $\alpha_{i,j}$, and $
	q_{i,j}$ for $1\leq i\leq \fs$ and $1\leq j\leq m_i$
	be as in Subsection~\ref{subsec:setup}. 
	Then let $d_1,\ldots,d_{\fs}$ and $\alpha_{i,j}$
	for $1\leq i\leq \fs$ and $m_i<j\leq d_i$
	be as in Subsection~\ref{subsec:setup}. Let $L$ be
	the Galois closure of $K$ and let $S'$ be the
	set of places of $L$ lying above $S$.
	\subsection{Proof of Property (i) of Theorem~\ref{thm:main1}}
    Assume there is an infinite subset $\cM_0$ of $\cM$
	such that $(q_1\alpha_1^n,\ldots,q_k\alpha_k^n)$ is not
pseudo-Pisot for every $(n,q_1,\ldots,q_k)\in\cM_0$.
	Let $\cM_1$ be an infinite subset of $\cM_0$ satisfying
	the conclusion of Proposition~\ref{prop:technical}. 
	Hence for every $(n,q_{i,j})_{i,j}\in \cM_1$, let $p$ denote
	the closest integer to $\displaystyle\sum_{i=1}^{\fs}\sum_{j=1}^{m_i}q_{i,j}\alpha_{i,j}^n$ then we can write
    $p=\displaystyle\sum_{i=1}^{\fs}\sum_{j=1}^{d_i}\eta_{i,j}\alpha_{i,j}^n$
	where the $\eta_{i,j}$'s satisfy the properties given
	in Proposition~\ref{prop:technical}. Hence:
	\begin{equation}
	\left\vert\sum_{i=1}^{\fs}\sum_{j=m_i+1}^{d_i} \eta_{i,j}\alpha_{i,j}^n\right\vert= \left\vert \displaystyle\sum_{i=1}^{\fs}\sum_{j=1}^{m_i}q_{i,j}\alpha_{i,j}^n-p\right\vert<\theta^n.
	\end{equation}
	By Lemma~\ref{lem:not in mu}, Proposition~\ref{prop:CZLemma1}, Proposition~\ref{prop:general SML},
	and the fact that $H(\eta_{i,j})=e^{o(n)}$, we
	have: 
	$$\max\{\vert\eta_{i,j}\alpha_{i,j}^n\vert:\ 1\leq i\leq\fs, m_i+1\leq j\leq d_i\}<1.$$
	for all but finitely many $(n,q_{i,j})_{i,j}\in \cM_1$.
	By Property (iv) in Proposition~\ref{prop:technical}, we
	have that $(q_{i,j}\alpha_{i,j}^n:\ 1\leq i\leq \fs, 1\leq j\leq m_i)$ is a pseudo-Pisot tuple for all but finitely
	many $(n,q_{i,j})_{i,j}\in\cM_1$. This contradicts the choice of
	$\cM_0$.
	
	\subsection{Proof of Property (ii) of Theorem~\ref{thm:main1}}
	Without loss of generality, assume that $\alpha_{1,1}$
	is not an algebraic integer and we will arrive at
	a contradiction. There is a non-archimedean place $w\in S'$ 
	such that 
	$\vert\alpha_{1,1}\vert_{w}>1$. We now let $\cM_0=\cM$. Let 
	$\cM_1$ be an infinite subset of $\cM_0$ satisfying the
	conclusion of Proposition~\ref{prop:technical}. 
	Hence  
	for every $(n,q_{i,j})_{i,j}\in \cM_1$,  we can write
    $p=\displaystyle\sum_{i=1}^{\fs}\sum_{j=1}^{d_i}\eta_{i,j}\alpha_{i,j}^n$
	where the $\eta_{i,j}$'s satisfy the properties given
	in Proposition~\ref{prop:technical} (hence $\eta_{i,j}\neq 0$ for every $(i,j)$). We have:
	\begin{equation}\label{eq:pw leq 1}
	\left\vert \displaystyle\sum_{i=1}^{\fs}\sum_{j=1}^{d_i}\eta_{i,j}\alpha_{i,j}^n \right\vert_w=\vert p\vert_w\leq 1.
	\end{equation}
	From $H(1/\eta_{1,1})=H(\eta_{1,1})=e^{o(n)}$, we have that
	for every $\delta\in (0,1)$, $\vert\eta_{1,1}\vert_w>\delta^n$
	when $n$ is sufficiently large.
	In particular, for every $1<\Omega<\vert \alpha_{1,1}\vert_w$, we have:
	\begin{equation}\label{eq:max etaijalphaij}
	\max\{\vert \eta_{i,j}\alpha_{i,j}^n\vert_w: 1\leq i\leq \fs, 1\leq j\leq d_i\}\geq \vert\eta_{1,1}\alpha_{1,1}^n\vert_w\geq \Omega^n.
	\end{equation}
	when $n$ is sufficiently large.
	Since $H(\eta_{i,j})=e^{o(n)}$, from inequalities \eqref{eq:pw leq 1}, and \eqref{eq:max etaijalphaij}, we can apply Proposition~\ref{prop:CZLemma1} (with an appropriate $\epsilon$) 
	to conclude that the $\eta_{i,j}\alpha_{i,j}^n$'s satisfy a non-trivial linear relation
	 for infinitely many $(n,q_{i,j})_{i,j}\in \cM_1$. Proposition~\ref{prop:general SML} and Lemma~\ref{lem:not in mu} yield
	 a contradiction.
	 
	 \subsection{Proof of Property (iii) of Theorem~\ref{thm:main1}} 
	 From $H(q_{i,j})=e^{o(n)}$, we have that for sufficiently large $n$, for $1\leq i_1,i_2\leq \fs$, $1\leq j_1\leq m_{i_1}$, and $1\leq j_2\leq m_{j_2}$ if $\sigma(q_{i_1,j_1}\alpha_{i_1,j_1}^n)=q_{i_2,j_2}\alpha_{i_2,j_2}^n$ then $\sigma(\alpha_{i_1,j_1})/\alpha_{i_2,j_2} \in\bmmu$. 
	 For the converse statement, it suffices to prove the following claim: fix $\sigma\in\Gal(L/\Q)$, 
$(i_1,j_1)$, and $(i_2,j_2)$, then for all but finitely many
$(n,q_{i,j})_{i,j}\in\cM$, if $\sigma(\alpha_{i_1,j_1})/\alpha_{i_2,j_2} \in\bmmu$ then $\sigma(q_{i_1,j_1}\alpha_{i_1,j_1}^n)=q_{i_2,j_2}\alpha_{i_2,j_2}^n$. 

	We prove this claim
by contradiction: assume there is an infinite subset
$\cM_0$ violating the conclusion of the claim. By Lemma~\ref{lem:not in mu}, we necessarily have that $\sigma(\alpha_{i_1,j_1})=\alpha_{i_2,j_2}$ and indeed $i_1=i_2:=i^*$ (otherwise, there is no such $\sigma$ and the claim is vacuously true). Hence
$\sigma_{i^*}(j_1)=j_2$. As before,
let $\cM_1$ be an infinite subset of $\cM_0$ as in Proposition~\ref{prop:technical}, then 
$p=\displaystyle\sum_{i=1}^{\fs}\sum_{j=1}^{d_i}\eta_{i,j}\alpha_{i,j}^n$. By Proposition~\ref{prop:technical}, we have:
$$\sigma(q_{i^*,j_1}\alpha_{i^*,j_1}^n)=\sigma(\eta_{i^*,j_1})\alpha_{i^*,j_2}^n=\eta_{i^*,j_2}\alpha_{i^*,j_2}^n=q_{i^*,j_2}\alpha_{i^*,j_2}^n,$$
contradicting the fact that $\cM_0$ does not satisfy the claim.
	
\subsection{Proof of Property (iv) of Theorem~\ref{thm:main1}}\label{subsec:iv}
We need to prove that $\vert\alpha_{i,j}\vert<1$
for $1\leq i\leq \fs$ and $m_i<j\leq d_i$. \emph{In fact, we will
prove the stronger inequality that
$\vert\alpha_{i,j}\vert\leq \theta$
for $1\leq i\leq \fs$ and $m_i<j\leq d_i$.}

Assume $\vert \alpha_{i^*,j^*}\vert >\theta$ for some $1\leq i^*\leq \fs$
and $m_{i^*}< j\leq d_{i^*}$. Let $\cM_0=\cM$ and 
let $\cM_1$ be an infinite subset of 
$\cM_0$
satisfying Proposition~\ref{prop:technical}. As before,
for every $(n,q_{i,j})_{i,j}\in\cM_1$, we have:
	\begin{equation}\label{eq:iv}
	\left\vert\sum_{i=1}^{\fs}\sum_{j=m_i+1}^{d_i} \eta_{i,j}\alpha_{i,j}^n\right\vert= \left\vert \displaystyle\sum_{i=1}^{\fs}\sum_{j=1}^{m_i}q_{i,j}\alpha_{i,j}^n-p\right\vert<\theta^n.
	\end{equation}
	
	Write $N=\displaystyle\sum_{i=1}^{\fs} (d_i-m_i)$. Let $\bfx$ be the vector 
	whose last coordinate is 1 and other coordinates
	are the $\alpha_{i,j}^n$. Then there is a constant $C_5$
	depending only on the $\alpha_{i,j}$'s such that
	$H(\tilde{\bfx})\leq C_5^n$. Choose
	$\epsilon>0$ sufficiently small such that 
	$\theta<\vert\alpha_{i^*,j^*}\vert C_5^{-\epsilon}.$
	Since $H(1/\eta_{i^*,j^*})=e^{o(n)}$, for every $\delta\in (0,1)$,
	we have $\vert \eta_{i^*,j^*}\vert > \delta^n$ when
	$n$ is sufficiently large. Together with
	the property $H(\eta_{i,j})=e^{o(n)}$, we have:
	$$\theta^n<\max\{\vert\eta_{i,j}\alpha_{i,j}^n\vert: 1\leq i\leq \fs, m_i<j\leq d_i\} \left(\prod_{i,j} H(\eta_{i,j})\right)^{-N-1-\epsilon}H(\tilde{\bfx})^{-\epsilon}.$$	
	Together with inequality \eqref{eq:iv}, Proposition~\ref{prop:CZLemma1}, Proposition~\ref{prop:general SML}, and Lemma~\ref{lem:not in mu},
	we obtain a contradiction.

\section{Proof of Corollary~\ref{cor:1} and Corollary~\ref{cor:2}}
	\subsection{Proof of Corollary~\ref{cor:1}}
	Parts (i) and (ii) follow immediately from Theorem~\ref{thm:main1}. In fact, for part (ii), we actually have
	that $\vert\sigma(\alpha_i)\vert\leq \theta$
	thanks to the arguments in Subsection~\ref{subsec:iv}.
	
	For part (iii), by using Theorem~\ref{thm:main1},
	we have that 
	$\sigma(R_i(n)\alpha_i^n)=R_j(n)\alpha_j^n$ for 
	all but finitely many $n\in \cM_\theta$. This immediately
	implies part (iii).
	
	\subsection{Proof of Corollary~\ref{cor:2}}
	As explained before, we may assume that the linear
	recurrence sequence $a_n=Q_1(n)\alpha_1^n+\ldots+Q_k(n)\alpha_k^n$ is non-degenerate and $\vert \alpha_i\vert\geq 1$
	for $1\leq i\leq k$. For every $m\in \N$ and $0\leq r\leq m-1$, by restricting to $n\equiv r$ modulo $m$, we may
	replace $(\alpha_1,\ldots,\alpha_k)$ by 
	$(\alpha_1^m,\ldots,\alpha_k^m)$. Hence we may
	assume that $(\alpha_1,\ldots,\alpha_k)$
	satisfies Properties (P1) and (P2) in Subsection~\ref{subsec:setup}. Let $\fs$ and $m_1,\ldots,m_{\fs}$
	be as before. We also relabel the $\alpha_i$'s and 
	$Q_i$'s as $\alpha_{i,j}$ and $Q_{i,j}$
	for $1\leq i\leq \fs$ and $1\leq j\leq m_i$
	as in Subsection~\ref{subsec:setup}. Corollary~\ref{cor:1}
	gives that for every $\sigma\in G_{\Q}$, $1\leq i\leq \fs$,
	and $1\leq j_1,j_2\leq m_i$, if
	$\sigma(\alpha_{i,j_1})=\alpha_{i,j_2}$ then
	$\sigma(Q_{i,j_1})=Q_{i,j_2}$. In particular, we have
	$Q_{i,j}\in \Q(\alpha_{i,j})[x]$ for every $1\leq i\leq \fs$
	and $1\leq j\leq m_i$. Let $d_1,\ldots,d_\fs$
	and $\alpha_{i,j}$ for $m_i<j\leq d_i$
	be as in Subsection~\ref{subsec:setup}.
	We can now define the polynomials $Q_{i,j}$ for $1\leq i\leq \fs$ and $m_i<j\leq d_i$ as follows: pick any
	$\sigma\in G_{\Q}$ such that $\sigma(\alpha_{i,1})=\alpha_{i,j}$, then define $Q_{i,j}=\sigma(Q_{i,1})$. Therefore
	the linear recurrence sequence:
	$$b_n:=\sum_{i=1}^{\fs}\sum_{j=1}^{d_i}Q_{i,j}(n)\alpha_{i,j}^n=a_n+\sum_{i=1}^{\fs}\sum_{j=m_i+1}^{d_i}Q_{i,j}(n)\alpha_{i,j}^n$$
	is invariant under $G_{\Q}$. Let $D\in \N$ such that
	the coefficients of $DQ_{i,j}$ are algebraic integers
	for all $i,j$. Since the $\alpha_{i,j}$'s are algebraic
	integers by Corollary~\ref{cor:1}, we have $Db_n\in\Z$ for every $n\in\N$. 
	
	Let $\theta_0=\max\{\vert\alpha_{i,j}\vert: 1\leq i\leq \fs, m_i< j\leq d_i\}$. We have that $\theta_0<1$ by Corollary~\ref{cor:1} (in fact, we even have $\theta_0\leq \theta$ by the proof of Corollary~\ref{cor:1}). Let $\tilde{\theta}\in (\theta_0,1)$. When $n$ is sufficiently large, the inequality
	$\Vert a_n\Vert_\Z < \tilde{\theta}^n$
	is equivalent to the condition that $b_n\in\Z$ which is, in turn, equivalent to $Db_n\equiv 0$ modulo $D$. The set of
	such $n$'s is the union of a finite set and finitely many
	arithmetic progressions.

	\bibliographystyle{amsalpha}
	\bibliography{PisotTuple8}

\end{document}